\newtheorem{theorem}{Theorem}
\newtheorem{coro}{Corollary}
\newtheorem{lem}{Lemma}
\newtheorem{lemma}{Lemma}
\newtheorem{definition}{Definition}
\newtheorem{remark}{Remark}
\newtheorem{proposition}{Proposition}
\newtheorem{example}{Example}
\def\Xint#1{\mathchoice
{\XXint\displaystyle\textstyle{#1}}%
{\XXint\textstyle\scriptstyle{#1}}%
{\XXint\scriptstyle\scriptscriptstyle{#1}}%
{\XXint\scriptscriptstyle\scriptscriptstyle{#1}}%
\!\int}
\def\XXint#1#2#3{{\setbox0=\hbox{$#1{#2#3}{\int}$ }
\vcenter{\hbox{$#2#3$ }}\kern-.6\wd0}}
\newcommand{\eps}{\varepsilon} \newcommand{\TT}{{\mathbb T^2_\ell}}
\def\dashint{\Xint-}
\def\eps{\varepsilon}
\def\({\left(}
\def\){\right)}
\def\1{\mathbf{1}}
\newcommand{\lessim}{\stackrel{<}{\sim}}
\def\div{\mathrm{div}}
\def\dt0{{{\frac{d}{dt}}_{|t=0}}}
\def\ep{\varepsilon}
\def\hal{\frac{1}{2}}
\def\h{{\eta}}
\def\lep{{|\mathrm{log }\ \ep|}}
\def\l|{\left|}
\def\mr{\mathbb{R}}
\def\nab{\nabla}
\def\r|{\right|}
\def\UR{{ K_R}}
\def\h{h_{\epsilon}^{\prime}}
\def\eps{{\varepsilon}}
\title{Asymptotics of non-minimizing stationary points of the Ohta-Kawasaki energy and its sharp interface version}
\author{Dorian Goldman \thanks{University of Cambridge (DPMMS) - dg443@dpmms.cam.ac.uk}}
\begin{document}

\maketitle

\begin{abstract}
We study a non-local Cahn-Hilliard energy arising in the study of di-block copolymer melts, often referred to as the Ohta-Kawasaki energy in that context. In this model,
two phases appear, which interact via a Coulombic energy. As in \cite{gms10b}--\cite{gms11b}, we focus on the regime where one of the phases has a very small
volume fraction, thus creating ``droplets'' of the minority phase in a ``sea'' of the majority phase. In this paper, we address the asymptotic behavior of non-minimizing stationary points in dimensions $n \geq 2$ left open by the study of the $\Gamma$-convergence of
the energy established in \cite{gms10b}--\cite{gms11b}, which provides information only for almost minimizing sequences when $n=2$. In particular, we prove that (asymptotically) stationary points satisfy a force balance condition which implies that the minority 
phase distributes itself uniformly in the background majority phase. Our proof uses and generalizes the framework of Sandier-Serfaty \cite{SS5,SS3}, used in the context of stationary points of the Ginzburg-Landau model, to 
higher dimensions. When $n=2$, using the regularity results obtained in \cite{GV}, we also are able to conclude that the droplets in the sharp interface energy become asymptotically round when the number of droplets is constrained
to be finite and have bounded isoperimetric deficit. 
\end{abstract}

\section{Introduction}

This paper is devoted to the convergence of stationary points of
the Ohta-Kawasaki energy functional \cite{ohta86} in the small volume regime. The energy functional has the following form:
\begin{align}
  \label{EE}
 \mathcal{E}[u] = \int_\Omega \( \frac{\eps^2}{2} |\nabla u|^2 +
  V(u)\) dx + \frac{1}{2} \int_\Omega \int_\Omega (u(x)- \bar u)
  G_0(x, y) (u(y)- \bar u) \, dx \, dy,
\end{align}
where $\Omega$ is the domain occupied by the material, $u: \Omega \to
\mathbb R$ is the scalar order parameter, $V(u)$ is a symmetric
double-well potential with minima at $u = \pm 1$, such as the usual
Ginzburg-Landau potential $V(u) = \tfrac{1}{4} (1 - u^2)^2$, $\eps > 0$ is a parameter
characterizing interfacial thickness, $\bar u \in (-1, 1)$ is the
background charge density, and $G_0$ is the Neumann Green's function
of the Laplacian, i.e., $G_0$ solves
\begin{align}
  \label{G0}
  -\Delta G_0(x, y) = \delta(x - y) - {1 \over |\Omega|}, \qquad
  \int_\Omega G_0(x, y) \, dx = 0,
\end{align}
where $\Delta$ is the Laplacian in $x$ and $\delta(x)$ is the Dirac
delta-function, with Neumann boundary conditions. Note that $u$ is
also assumed to satisfy the ``charge neutrality'' condition
\begin{align}
  \label{neutr}
  {1 \over |\Omega|} \int_\Omega u \, dx = \bar u.
\end{align}

For a discussion of the motivation and the main quantitative features
of this model, see  \cite{gms10b}, as well as
\cite{m:cmp10,m:pre02}. For specific applications to physical systems,
we refer the reader to
\cite{degennes79,stillinger83,ohta86,nyrkova94,glotzer95,lundqvist,m:pre02,m:phd}. For the remainder of this paper we focus on the case
where $\Omega$ is the flat n-dimensional torus $\mathbb{T}^n = [0,1)^n$ with periodic boundary conditions, unless otherwise specified.

\noindent We focus most of our attention on the following ``sharp interface'' version of \eqref{EE}:
\begin{equation}\label{E2}
 E^{\epsilon}[u] = \frac{\eps}{2} \int_{\mathbb{T}^n} |\nabla u| + \frac{1}{2}\iint_{\mathbb{T}^n \times \mathbb{T}^n} (u(x)-\bar u) G(x-y) (u(y)-\bar u)dxdy,
\end{equation}
reserving most of our analysis of the diffuse interface energy \eqref{EE} to the final section of this paper (Section \ref{sec5}). In \eqref{E2}, $G$ is the \emph{screened} Poisson kernel solving
\begin{equation}
 -\Delta G + \kappa^2 G = \delta(x-y) \textrm{ in } \mathbb{T}^n,
\end{equation}
for $\kappa =1/\sqrt{V''(1)} > 0$ and $u \in \mathcal{A}$ where
\begin{equation}
 \mathcal{A} := \{ u \in BV(\mathbb{T}^n; \{-1,1\})\}.
\end{equation}

The charge neutrality condition (cf. equation \eqref{neutr}) is no longer imposed, i.e. $\int_{\mathbb{T}} u \neq \bar u$. This is related to the fact that the charge of the minority
phase is expected to partially redistribute itself into the majority phase to ensure screening of the induced non-local field (see \cite{gms10b} for a more detailed discussion).
 The energy \eqref{E2} was first studied
in \cite{m:cmp10} where the connection between \eqref{EE} and \eqref{E2} is made for exact minimizers. Moreover, when $n=2$, it is shown that when $\bar u$ is close to $-1$, minimizers of \eqref{E2}
form almost spherical ``droplets'' of the minority phase $\{u=+1\}$ with the same radius, distributed uniformly throughout the domain. In \cite{gms10b}--\cite{gms11b} the full $\Gamma$-limit of \eqref{E2} was computed
to first and second order near the onset of non-trivial minimizers (see \cite{Braides} for an introduction to $\Gamma$-convergence), with \cite{gms10b} addressing the $\Gamma$ limit of \eqref{EE} as well. There it is shown that, in addition, almost minimizers
form (on average) almost spherical droplets of the phase $\{u=+1\}$, with almost the same radius and which are once again distributed uniformly throughout the domain. An important observation in these works is that, as $\eps \to 0$, the number of disjoint connected components of $\{u^{\eps}=+1\}$ may be unbounded \cite{gms10b, gms11b, m:cmp10}, and
the results can thus be seen as generalizations of the work of Choksi and Peletier who study a suitably rescaled version of \eqref{EE} and \eqref{E2} in the absence of screening (ie. $\kappa = 0$) and when the number of droplets is constrained to be finite \cite{choksi10,choksi11}. More precisely,
they compute the $\Gamma$-limit in this setting of \eqref{EE} and \eqref{E2} in \cite{choksi11} and \cite{choksi10} respectively, showing, in particular, that the droplets of the minority phase $\{u=+1\}$ shrink to points whose magnitudes and locations
are determined via a limiting Coulombic interaction energy. A related result concerning minimizers
is the work of Alberti-Choksi-Otto and Spadaro \cite{ACO,spadaro09}, wherein it is shown that the energy of minimizers of \eqref{EE} and \eqref{E2} respectively is uniformly distributed throughout the domain.

All of the above results are concerned with \emph{minimizing} stationary points of the energies \eqref{EE} and \eqref{E2}. Moreover, all of the results regarding the asymptotics of minimizers when the number of droplets is unbounded work only in dimension $n=2$. In this paper we address a question left open in the above analysis
which is that of the asymptotic behavior of a priori \emph{non-minimizing} stationary points of the energies \eqref{EE} and \eqref{E2} which, moreover, applies to any dimension $n \geq 2$. There has
been some work in this context by  R\"{o}ger and Tonegawa \cite{CPoints}. They show that when the number of droplets is constrained to be finite in a bounded domain $\Omega$ with a fixed volume fraction, 
that any sequence of critical points $(u^{\eps})_{\eps}$ of \eqref{EE}, i.e. solutions to

 \[ -\eps^2 \Delta u^{\eps} + V'(u^{\eps}) + \phi_{\eps} = \lambda_{\eps},\]

where $\phi_{\eps}(x) = \(G_0(x-\cdot) * (u^{\eps}-\bar u^{\eps})\)(x)$ and $\lambda_{\eps}$ is a Lagrange multiplier arising from \eqref{neutr}, satisfying mild bounds on the energy, converge in an appropriate sense to the Gibbs-Thompson law:
\begin{equation}\label{cpweak} \sigma H =\left\{\begin{array}{ccc}
-\phi + \lambda&\mbox{for}&x \in \partial^*\{u=+1\}\\
0&\mbox{for}&x \in \partial \{u=+1\} \backslash \partial^* \{u=+1\}.
\end{array}\right. \end{equation}

Here $H$ is the mean curvature of $\{u=+1\}$ where $u \in BV(\Omega; \{-1,+1\})$ and $\phi$ are both appropriately rescaled limits of $u^{\eps}$ and $\phi_{\eps}$ respectively, $\sigma$ is an integer which arises from the `folding' of the interfaces, $\partial^*\{u=+1\}$ denotes the reduced
boundary of $\{u=+1\}$ (see Section \ref{finper}) and $\lambda$ is the limiting Lagrange multiplier constant.
This establishes the connection between critical points of the diffuse
interface energy \eqref{EE} and its sharp interface analogue (replacing the first Cahn-Hilliard term with perimeter). Our goal differs from that
of \cite{CPoints}, as we wish to establish the distribution of the small droplets in the regime where the volume of the minority phase vanishes, and the
number of droplets is not constrained to be finite a priori for \eqref{EE} and \eqref{E2}. 

 To understand our goal more precisely, we recall some of the main results of \cite{gms10b} for almost minimizers of \eqref{E2}. We begin by setting 
\[ \bar u^{\eps} = -1 + \delta(\eps),\]
in \eqref{E2} and show for almost minimizers of \eqref{E2}, when $\delta(\eps) = \eps^{2/3}|\ln \eps|^{1/3}\bar \delta$ with $\bar \delta >0$, that the number
of droplets of $\{u^{\eps}=+1\}$ is $O(|\ln \eps|)$ as $\eps \to 0$ and, moreover, that
\begin{equation}\label{msrconv1} \omega_{\eps} := \bar \delta \delta(\eps)^{-1} (1 + u^{\eps}) \rightharpoonup \bar \omega \textrm{ in } C(\mathbb{T}^2)^*,\end{equation}
where $\omega_{\eps}$ is the ``normalized droplet density'' of the phase $\{u^{\eps}=+1\}$ and where $\bar \omega$ is the unique constant density minimizer to 
\begin{align}
  \label{E0mu}
  E^0[\omega ] = \frac{\bar \delta^2}{2\kappa^2} + \left(3^{2/3} -
    \frac{2 \bar \delta}{\kappa^2} \right) \int_{\mathbb{T}^2} d\omega  + 2
  \iint_{\mathbb{T}^2 \times \mathbb{T}^2} G(x - y) d \omega (x) d \omega (y),
\end{align}
over all Radon measures $\omega \in H^{-1} (\mathbb{T}^2)$. Moreover, $\bar \omega$ is given explicitly by
\begin{align}
  \label{mubar}
  \bar\omega  = \max\(\tfrac12 (\bar\delta - \bar \delta_c), 0\) \qquad \text{with}
  \qquad E^0[\bar\omega] = \tfrac{\bar\delta_c}{2 \kappa^2} (2 \bar\delta
  - \bar\delta_c),
\end{align}
where $\bar \delta_c > 0$ is the critical volume fraction for the onset of non-trivial minimizers (ie. $\bar \omega \neq 0$).
In addition, setting $v_{\eps}$ to be the solution to
\[ -\Delta v_{\eps} + \kappa^2 v_{\eps} = \omega_{\eps},\]
we conclude that 
\begin{equation}\label{potconv} \nabla v_{\eps} \rightharpoonup 0 \textrm{ weakly in } H^{1}(\mathbb{T}^2).\end{equation}

The convergence in equations \eqref{msrconv1} and \eqref{potconv} show that $v_{\eps}$ and $\omega_{\eps}$ are asymptotically constant in an averaged sense as $\eps \to 0$, which physically
suggests the droplets are uniformly distributed throughout the domain. One way of phrasing the goal of this paper, is to ask the following question: Do the normalized droplet densities still converge weakly to a constant when we drop the assumption of minimality? Moreover, does this fact continue to hold in higher dimensions? We answer these questions under the single assumption that the perimeter of the set $\{u^{\eps}=+1\}$ vanishes as $\eps \to 0$. 
We make similar conclusions for the diffuse interface energy \eqref{EE}, but reserve
this discussion for a separate section (Section 7).

Before we proceed, we give a precise definition of a stationary point of \eqref{E2}. In addition to the class $\mathcal{A}$ defined above, we occasionally consider stationary points in $\mathcal{A}$ with mass constraint $m$:
\begin{equation}\label{Amm}
 \mathcal{A}_m := \left\{u \in \mathcal{A} : \int_{\mathbb{T}^n} u = m\right\}.
\end{equation}

\begin{definition}\label{defcp}
 A function $u \in \mathcal{A}$ is said to be a \emph{stationary point of \eqref{E2} in $\mathcal{A}$} if for any  $C^1$ vector field $X:\mathbb{T}^n \to \mathbb{R}^n$ we have,
setting $\phi_t(x) = x + tX(x)$, that
\begin{align}\label{dervan} \frac{d}{dt}\Big|_{t=0}E^{\eps}(u \circ \phi_t) = 0.\end{align}
If \eqref{dervan} holds only for all $\phi_t$ such that $u \circ \phi_t \in \mathcal{A}_m$ for all $t$ sufficiently small, then we call $u$
a \emph{stationary point of \eqref{E2} in $\mathcal{A}_m$}.
\end{definition}

We proceed by showing that, away from a very small set on which the droplets are concentrated, we obtain a limiting condition on the measure $\omega_{\eps}$ which takes the form
\begin{equation}\label{cpeqn} \omega_{\eps} \nabla v^{\eps} \to \omega \nabla v = 0,\end{equation}
in a suitably weak sense. The convergence above clearly does not follow from the weak convergence
of $\omega_{\eps}$ (cf. equation \eqref{msrconv1}) and the weak convergence of the potential (cf. equation \eqref{potconv}). This is similar to the problem which
arises when studying weak limits of solutions to the Euler equations in vorticity form as in \cite{pernamajda, pernamajda2, Chemin, delort, Majda,zheng} in dimension $n=2$. It was originally 
Delort \cite{delort} who first recognized the phenomenon of ``vorticity concentration cancellation", which allows one to nonetheless pass to the limit
in \eqref{cpeqn} in a distributional sense when $\omega_{\eps}$ has a distinguished sign. Similar analysis was done by DiPerna and Majda \cite{pernamajda,pernamajda2,Majda} which
which allows for $\omega$ to have mixed signs under additional assumptions. There are, of course, natural regularity issues with the above equation, and we will see in Theorem \ref{main2} that the regularity we assume on $\omega$
allows us to obtain more precise information from \eqref{cpeqn}. When $\omega$ is a smooth density for instance, it is easy to see that \eqref{cpeqn} implies
that $\omega$ is constant on $\mathbb{T}^n$, so that the normalized droplet densities converge weakly to a constant. We obtain two characterizations of this condition, both of which imply that the droplets of the minority phase satisfy a kind of ``force balance" condition, where the overall force on each limiting droplet is zero. Moreover, unlike the analysis of the Euler equations, our approach applies to all dimensions $n \geq 2$.

\medskip

We have the \emph{additional difficulty}, however, that we have contributions from the local terms in \eqref{EE} and \eqref{E2} which measure the perimeter
of the level sets of $u$ when we take variations of the energy. Here we adopt, and generalize, the techniques in \cite{SS5} and \cite[Chapter 13]{SS3} which were used to prove similar results in the context
of Ginzburg-Landau. There it is shown that it suffices to establish \eqref{cpeqn} away from a very small set where the contributions of the surface
terms are concentrated. Thus this framework can be seen as a generalization of the method of vorticity concentration cancellation introduced by
Delort \cite{delort} for measures with distinguished sign, which allows for additional contributions to \eqref{cpeqn} that are concentrated on small sets, and which also allows for the measures to take on mixed signs, making it somewhat more similar to the work of DiPerna and Majda \cite{pernamajda,pernamajda2, Majda}.

\medskip

In order to make sense of \eqref{E2} and its first variation, we must use extensively the theory of sets of finite perimeter (see \cite{maggi,Gariepy} for nice expositions, or \cite{Simons} for a more general treatment which includes varifolds, which may have higher co-dimension). Unlike the analysis of the corresponding Euler-Lagrange equation which corresponds
to minimizers in \cite{m:cmp10}, here we will assume no minimality, and thus cannot expect global smoothness of the boundary. While it is known that local minimizers have boundaries which are of class $C^{3,\alpha}$ for some $\alpha>0$ \cite{ambrosio3,massari,tamanini,m:cmp10,sternberg}, the question of regularity of the reduced boundary of stationary points of \eqref{E2} has only recently been addressed in \cite{GV}. More precisely, in \cite{GV}, we provide a simple proof that the reduced boundary of any stationary point of \eqref{E2} is of class $C^{3,\alpha}$, utilizing Allard's regularity
theorem \cite{Allard}, and present a rigorous derivation of the Euler-Lagrange equation satisfied by stationary points of \eqref{E2}. The additional regularity obtained therein allows us to make stronger statements concerning the limiting behavior of critical points in dimension $n=2$; in particular,
we show that, in the case of a bounded number of droplets which have bounded isoperimetric deficit, the generalized mean curvature of each connected component of $\{u^{\eps}=+1\}$ (appropriately normalized) is asymptotically constant.

Our paper is organized as follows. In Section \ref{sharpsetup} we set up certain notation which will be used throughout the paper, and present our three main results in Sections
\ref{sharpsection}, \ref{cpregularity} and \ref{diffusesection} respectively. In Section \ref{finper} we provide a brief introduction to the theory of sets of finite perimeter
and weak mean curvature. In Section \ref{sec4} we prove the main result of Section \ref{sharpsection} for
stationary points of the sharp interface energy \eqref{E2}. We then address the case of the diffuse interface energy (cf. equation \eqref{EE}) in Section \ref{sec5}, where we prove the main result of Section \ref{diffusesection}.\\

\textbf{Notation:}
 We will denote $\mathcal{D}'(\Omega)$ as the space of distributions on $\Omega$ and $H^k(\Omega)$ and $W^{k,p}(\Omega)$ will, as usual, denote the standard Sobolev spaces. We denote as $\mathcal{H}^{k}$ the standard
$k$-dimensional Hausdorff measure. For a measurable set $E \subset \Omega$, $P(E \cap \Omega)$ will denote its relative perimeter (see Section \ref{finper} for definitions), and $|E|$ will denote its standard n-dimensional Lebesgue measure. We write as $\mathbb{T}^n = [0,1)^n$, the standard flat n-dimensional torus.
With some abuse of notation, we will sometimes say $E \subset \mathcal{A}$ (or $\mathcal{A}_m$) when we mean $\chi_{E}$, the indicator function of $E$, belongs to $\mathcal{A}$ (respectively $\mathcal{A}_m$). Finally we denote $\alpha_{n-1}$ as the volume
of the unit ball in $\mathbb{R}^{n-1}$. 

\section{Problem formulation and main results}\label{sharpsetup}

In this section, we first rewrite the energy \eqref{E2} in a way which is more convenient for the subsequent presentation and analysis. We begin with the result of Ambrosio et al. \cite{ambrosio2} which allows us to decompose (up to $\mathcal{H}^{n-1}$ negligible sets) $\{u=+1\}$ into a countable collection of connected components $\{\Omega_i\}$ contained in a single cell of $\mathbb{T}^n$ when $\mathcal{H}^{n-1}(\{u=+1\})$ is sufficiently small:
\begin{equation}\label{F1}
 u(x) = -1 + 2\sum_{i} \chi_{\Omega_i}(x),
\end{equation}
and we set
\begin{equation}\label{F2}
 \bar u = -1 + \delta(\eps),
\end{equation}
where we assume $\delta(\eps)$ is bounded as $\eps \to 0$. We define the ``normalized droplet density"
\[\omega_{\eps} := \frac{\sum_i \chi_{\Omega_i}}{\sum_i |\Omega_i|},\]
so that $\omega_{\eps}$ is a probability measure on $\mathbb{T}^n$ for all $\eps>0$. If we insert \eqref{F1} and \eqref{F2} into the sharp interface energy \eqref{E2} we obtain
\begin{multline}\label{E2a}
E^{\eps}[u^{\eps}]  = \eps \sum_i P(\Omega_i) - \frac{2 \delta(\eps)}{\kappa^2}\sum_i |\Omega_i|  + 2\iint_{\mathbb{T}^n \times \mathbb{T}^n} G(x-y) \sum_i \chi_{\Omega_i}(x) \sum_i \chi_{\Omega_i}(y)dx\;dy\\ + \frac{\delta(\eps)^2}{2 \kappa^2},  
\end{multline}
where we set
\begin{align}\label{ubeps}
 -\Delta v_{\eps} + \kappa^2 v_{\eps} =  \frac{\sum_i \chi_{\Omega_i}}{\sum_i |\Omega_i|} =: \omega_{\eps}.
\end{align}
The rewriting of \eqref{E2} expressed by \eqref{E2a} will turn out to be more convenient for our purposes, as it allows us to focus on a non-local energy which depends only
on the normalized droplet density $\omega_{\eps}$  (and not $\bar u^{\eps}$). Our goal is to derive a suitably weak form of \eqref{cpeqn}. We proceed by computing the Euler-Lagrange equation of \eqref{E2} and show that this is equivalent to a certain $2$ tensor
$\{S_{ij}\}=S^{\eps}$ having zero divergence. The idea is then to pass to the limit in the condition
\[ \div S^{\eps} = 0\]
as $\eps \to 0$, and obtain a weak form of \eqref{cpeqn} as the limiting condition. This may at first appear surprising, as there will be contributions (in the form of curvature) from the perimeter term in \eqref{E2a}, and \eqref{cpeqn} seems to depend only on the non-local terms. As alluded to before, we show that the contributions from these local terms occur in a very small set so that we are still able to conclude \eqref{cpeqn} in an appropriately weak sense outside of this set, and this turns out to be enough to make our main conclusions. More precisely, we show that the set where the local terms are concentrated in the Euler-Lagrange equations have arbitrarily small 1-capacity. \\

\noindent We recall from Evans-Gariepy \cite{Gariepy} the definition of $p$-capacity of a set $E \subset \mathbb{R}^n$:
\[ \textrm{ Cap}_p(E) = \inf \left\{ \int_{\mathbb{R}^n} |\nabla \varphi|^p; \varphi \in L^{p^*}(\mathbb{R}^n), \nabla \varphi \in L^p(\mathbb{R}^n), E \subset \textrm{ int} (\varphi \geq 1)\right\},\]
where int$(A)$ denotes the interior of $A$ and $p^*=2p/(2-p)$. 
We will show that up to a set of very small 1-capacity, the tensor $S^{\eps}$ is close to the tensor $T^{\eps}$ in $L^1(\mathbb{T}^n)$ defined by
\[ T_{ij}^{\eps} = - \partial_i v_{\eps} \partial_j v_{\eps} + \frac{1}{2} \delta_{ij} (|\nabla v_{\eps}|^2 + \kappa^2 v_{\eps}^2),\]
where the condition
\[ \div T^{\eps} = 0\]
implies that 
\begin{equation}\label{cp1}  \omega_{\eps} \nabla v_{\eps} = 0 \textrm{ in } L_{loc}^1.\end{equation}

\noindent Our goal is to pass to the limit in this condition and obtain the weak form of \eqref{cpeqn}:
\begin{equation}\label{cp2}  \div T = 0,\end{equation}
up to a set of arbitrarily small 1-capacity, where $T$ is the 2-tensor with components $T_{ij}$ given by
\begin{equation}\label{Tdef} T_{ij} = - \partial_i v \partial_j v + \frac{1}{2} \delta_{ij} (|\nabla v|^2 + \kappa^2 v^2),\end{equation}
and $v$ is the distributional limit of $v^{\eps}$ (cf. equation \eqref{ubeps}) obtained from the weak convergence of $\omega_{\eps}$ to $\omega$. 
The condition \eqref{cp1} is in fact obtained by taking variations of the non-local term in $\eqref{E2a}$ of the form $v_t(x)=v(x + tX(x))$, often
called ``inner variations''. More precisely, condition \eqref{cp1} arises from the vanishing of
\[\frac{d}{dt}\Big|_{t=0} \int_{\mathbb{T}^n} |\nabla v_t|^2 + \kappa^2 v_t^2 dx.\]

The vanishing
of the divergence of this tensor (cf. equation \eqref{cp2} implies, in particular, that $v$ is constant on the support of $\omega$ if $\omega \in L^p(\mathbb{T}^n)$ for large
enough $p$ and a `vanishing gradient property', first established in \cite{BBH} in the context of Ginzburg-Landau, if $\omega = \sum_{i=1}^d b_i \delta_{a_i}$ (see Theorem 1), which formally states that the force
on each particle is balanced by the others.
 We now make some of these notions precise in order to state our main result, and begin with the following definition, taken from \cite{SS3}.

\begin{definition}\emph{\textbf{(Divergence-free in finite part)}} \label{divfreefin} Assume $X$ is a vector field in $\mathbb{T}^n$. We say $X$ is divergence-free in finite part if there exists
 a family of sets $\{E_{\delta}\}_{\delta > 0}$ such that
\begin{itemize}
 \item[1.] We have $\lim_{\delta \to 0} \textrm{\em Cap}_1(E_{\delta}) = 0$.
\item[2.] For every $\delta > 0$, $X \in L^1(\mathbb{T}^n \backslash E_{\delta})$.
\item[3.] For every $\zeta \in C^{\infty}(\mathbb{T}^n)$,
\[ \int_{\mathbb{T}^n \backslash F_{\delta}} X \cdot \nabla \zeta = 0,\]
where $F_{\delta}=\zeta^{-1} (\zeta(E_{\delta}))$. 
\end{itemize}
If $T$ is a 2-tensor with coefficients $\{T_{ij}\}_{1 \leq i,j \leq n}$ we say $T$ is divergence-free in finite part
if the vectors $T_i = (T_{i1},T_{i2}, \cdots, T_{in})$ are, for $i=1,2,\cdots,n$. 
\end{definition}

To see that the above definition is consistent with the ordinary notion of divergence free, we borrow the following proposition from \cite{SS3}.

\begin{proposition}\label{divfreenice}
 Assume that $X$ is divergence free in finite part in $\mathbb{T}^n$ and that $X \in L^1(\mathbb{T}^n \backslash E)$. Then for every $\zeta \in C_c^{\infty}(\mathbb{T}^n)$
we have
\[ \int_{\mathbb{T}^n \backslash F} X \cdot \nabla \zeta = 0,\]
where $F=\zeta^{-1}(\zeta(E))$. In particular, if $X$ is in $L^1(\mathbb{T}^n)$, then $F= \emptyset$ in the above and therefore $\div X = 0$ in $\mathcal{D}'(\mathbb{T}^n)$.
\end{proposition}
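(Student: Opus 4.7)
The plan is to leverage the family $\{E_\delta\}_{\delta > 0}$ provided by the definition of divergence-free in finite part, and to pass to the limit $\delta \to 0$. The key tool is the coarea formula, which reduces the whole argument to a one-dimensional statement on $\mathbb{R}$.

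First, under the hypotheses $X \in L^1(\mathbb{T}^n \setminus E_\delta)$ for each $\delta$ and $X \in L^1(\mathbb{T}^n \setminus E)$, I would define the flux function $g(c) := \int_{\zeta^{-1}(c)} X \cdot (\nabla \zeta / |\nabla \zeta|)\, d\mathcal{H}^{n-1}$, which is well defined for a.e.\ regular value $c$ of $\zeta$. The coarea formula gives
\[
\int_{\mathbb{T}^n \setminus F_\delta} X \cdot \nabla \zeta\, dx = \int_{\mathbb{R} \setminus \zeta(E_\delta)} g(c)\, dc, \qquad \int_{\mathbb{T}^n \setminus F} X \cdot \nabla \zeta\, dx = \int_{\mathbb{R} \setminus \zeta(E)} g(c)\, dc,
\]
and the first of these vanishes for every $\delta > 0$ by the defining condition. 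To extract pointwise information on $g$, I would apply the defining condition to test functions of the form $\phi \circ \zeta$ for strictly monotone $\phi \in C^\infty(\mathbb{R})$; since $\phi$ is then injective on the range of $\zeta$, one checks that $(\phi \circ \zeta)^{-1}((\phi \circ \zeta)(E_\delta)) = F_\delta$, and coarea yields $\int_{\mathbb{R} \setminus \zeta(E_\delta)} \phi'(c)\, g(c)\, dc = 0$. Taking linear combinations of monotone increasing and decreasing $\phi$, so that the resulting $\phi'$ sweep out a dense subset of $C^\infty(\mathbb{R})$, forces $g \equiv 0$ a.e.\ on $\mathbb{R} \setminus \zeta(E_\delta)$ for every $\delta > 0$.

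The main technical step --- and the one I expect to be the principal obstacle --- is to establish that $\mathcal{L}^1(\zeta(E_\delta)) \to 0$ as $\delta \to 0$. From $\mathrm{Cap}_1(E_\delta) \to 0$ one obtains smooth cutoffs $\phi_\delta$ with $\phi_\delta \geq 1$ on $E_\delta$ and $\|\nabla \phi_\delta\|_{L^1} \to 0$. A level-set selection via coarea applied to $\phi_\delta$, combined with the isoperimetric inequality on $\mathbb{T}^n$, produces open neighborhoods $U_\delta \supset E_\delta$ with $P(U_\delta) \to 0$ and $|U_\delta| \to 0$. Bounding $\mathcal{L}^1(\zeta(U_\delta))$ then calls for decomposing $U_\delta$ into connected components, estimating the diameter of each component by a power of its perimeter via isoperimetry, and summing the resulting contributions using the Lipschitz continuity of $\zeta$. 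This summation is delicate in dimensions $n \geq 3$, where small-perimeter sets can still have components of non-negligible diameter, and where a careful concavity or $\ell^1$-versus-$\ell^p$ argument is needed.

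Finally, combining the two ingredients: since $g \equiv 0$ a.e.\ on $\mathbb{R} \setminus \zeta(E_\delta)$,
\[
\int_{\mathbb{R} \setminus \zeta(E)} g(c)\, dc = \int_{\zeta(E_\delta) \setminus \zeta(E)} g(c)\, dc,
\]
which tends to zero as $\delta \to 0$ by the absolute continuity of the integral of $g \in L^1(\mathbb{R} \setminus \zeta(E))$ together with $\mathcal{L}^1(\zeta(E_\delta)) \to 0$. Re-applying coarea then yields $\int_{\mathbb{T}^n \setminus F} X \cdot \nabla \zeta = 0$. The ``in particular'' statement is immediate on taking $E = \emptyset$: indeed $F = \zeta^{-1}(\emptyset) = \emptyset$, so $\int_{\mathbb{T}^n} X \cdot \nabla \zeta = 0$ for every $\zeta$, i.e.\ $\mathrm{div}\, X = 0$ in $\mathcal{D}'(\mathbb{T}^n)$.
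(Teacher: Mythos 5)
Your first two steps --- rewriting $\int_{\mathbb{T}^n\setminus F_\delta}X\cdot\nabla\zeta$ as $\int_{\mathbb{R}\setminus\zeta(E_\delta)}g(c)\,dc$ by coarea, and testing condition 3 of Definition \ref{divfreefin} with $\phi\circ\zeta$ for strictly monotone $\phi$ to force $g=0$ a.e.\ on $\mathbb{R}\setminus\zeta(E_\delta)$ --- are correct, and they reproduce the structure of the proof in \cite{SS3} (the paper itself gives no proof, borrowing the statement from that reference, which works in $n=2$). The closing absolute-continuity step is also fine. The entire weight of the argument therefore rests on the claim you rightly single out as the main technical step, namely $\mathcal{L}^1(\zeta(E_\delta))\to 0$, and this is where the proposal has a genuine gap for $n\ge 3$. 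The repair you sketch --- decompose a small-perimeter neighborhood $U_\delta\supset E_\delta$ into connected components and bound each component's diameter by a power of its perimeter --- is a strictly two-dimensional fact ($\mathrm{diam}\le \tfrac12 P$ for connected planar sets). In $\mathbb{R}^n$ with $n\ge 3$, a tube of radius $r$ about a unit segment has perimeter $O(r^{n-2})$ and volume $O(r^{n-1})$, both vanishing, while its diameter and the length of its image under $\zeta=x_1$ remain equal to $1$; since $\mathrm{Cap}_1$ scales like $\mathcal{H}^{n-1}$, sets of vanishing $1$-capacity place no constraint at all on one-dimensional projections once $n\ge3$, and no concavity or $\ell^1$-versus-$\ell^p$ summation can recover the estimate.

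Worse, the missing step cannot be supplied, because for $n\ge 3$ the statement as written is false, so any proof must secretly use $n=2$. Take $n=3$, let $L\subset\mathbb{T}^3$ be a smooth closed curve and $f$ a smooth function on $L$ with $\int_L f\,d\mathcal{H}^1=0$ but $f\not\equiv 0$, and set $X=-\nabla v$ with $-\Delta v=f\,\mathcal{H}^1\llcorner L$. Then $X\in L^1(\mathbb{T}^3)$ because $|X|\lesssim \mathrm{dist}(\cdot,L)^{-1}$, and with $E_\delta$ the closed $\delta$-tube about $L$ one has $\mathrm{Cap}_1(E_\delta)=O(\delta)\to 0$. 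For a.e.\ regular value $c\notin\zeta(L)$ the flux $g(c)$ equals $\int_{L\cap\{\zeta<c\}}f\,d\mathcal{H}^1$, which is either $0$ or $\int_L f=0$ since $\zeta(L)$ is an interval; as $\zeta(E_\delta)\supset\zeta(L)$, condition 3 holds for every $\zeta$, so $X$ is divergence free in finite part. Yet $\mathrm{div}\,X=f\,\mathcal{H}^1\llcorner L\neq 0$, contradicting the ``in particular'' clause with $E=\emptyset$. Your outline is thus a complete and correct proof for $n=2$ (where small $\mathrm{Cap}_1$ does yield a cover by disks with $\sum_i r_i\to 0$, hence $\mathcal{L}^1(\zeta(E_\delta))\le 2\|\nabla\zeta\|_\infty\sum_i r_i\to 0$), but for $n\ge 3$ the proposition requires a stronger hypothesis on the exceptional sets --- for instance that they admit coverings by balls with $\sum_i r_i\to 0$, rather than $\sum_i r_i^{n-1}\to 0$ as supplied by Proposition \ref{sumballs} --- and your proof, like the statement, does not survive in the claimed generality.
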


\subsection{Main result I: The sharp interface energy \eqref{E2}}\label{sharpsection}
Our first main result concerning stationary points of \eqref{E2} is the following.
\begin{theorem} \textbf{ \em (Equidistribution of droplets)} \label{main2}
 Let $u^{\eps} \in \mathcal{A}$ be a sequence of stationary points of \eqref{E2} in $\mathcal{A}$ in the sense of Definition 1 and assume 
\begin{align}\label{apriori1}
 \limsup_{\eps \to 0} \int_{\mathbb{T}^n} |\nabla u^{\eps}| = 0.
\end{align}
Then for any $p \in (1,n/(n-1))$,  $\omega_{\eps}$ converges in $W^{-1,p}$ to a probability measure $\omega$ and
 $v_{\eps}$ converges in $W^{1,p}$ to $v$, where $v$ and $\omega$ are related via
\begin{equation}\label{vdef3}-\Delta v + \kappa^2 v = \omega.\end{equation}
 Moreover, the symmetric 2-tensor $T_{\omega}$ with coefficients
 $T_{ij}$ given by \eqref{Tdef} is divergence free in finite part. In addition, we have the following characterizations of the divergence free condition on $T_{\omega}$. 
\begin{itemize}
\item[0.] If $\int d\omega_{\eps} =0$ for all $\eps>0$ sufficiently small, then
\begin{equation} \omega \equiv 0.\end{equation}
 \item[1.] If $\omega \in H^{-1}(\mathbb{T}^n)$ then 
\begin{equation}\label{divfreecon} \div T = 0 \textrm{ in } \mathcal{D}'(\mathbb{T}^n).\end{equation}

\item [2.] If $\omega \in L^p$ for $p >1$ when $n=2$ and $p \geq 2n/(n+1)$ otherwise and $\omega \neq 0$, then in fact \[ \omega = \mathbf{1}\;dx,\] the uniform Lebesgue measure on $\mathbb{T}^n$.
\item[3.] If $\omega = \sum_{i=1}^d b_i \delta_{a_i}$ then setting $v(x) =  \Phi (|x-a_i|) +  H_i(x)$ where $\Phi$ is the fundamental
solution to the Laplace equation in $\mathbb{R}^n$ and $H_i$ is smooth in a neighborhood
of $a_i$, we have
\begin{equation}\label{VGP} \nabla H_i(a_i) = 0,\end{equation}
for $i=1,\cdots,d$. 
 \end{itemize}
\end{theorem}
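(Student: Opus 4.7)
The plan is to obtain compactness, derive a divergence-free stress-energy identity via inner variations, absorb the interfacial contribution into a set of vanishing $1$-capacity via a Sandier-Serfaty-style covering, and then translate the resulting condition $\operatorname{div} T = 0$ (in finite part) into the four listed consequences.

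First, I would establish compactness. Since each $\omega_\eps$ is a probability measure on $\mathbb{T}^n$, the family is bounded in $W^{-1,p}$ for every $p\in(1,n/(n-1))$ by duality with the Sobolev embedding $W^{1,q}\hookrightarrow C^0$ for $q>n$. Extracting a subsequence yields $\omega_\eps\rightharpoonup\omega$ in $W^{-1,p}$, and elliptic regularity for $-\Delta+\kappa^2$ then gives $v_\eps\to v$ strongly in $L^p$ and weakly in $W^{1,p}$, with $v$ solving \eqref{vdef3}. Next, I would compute the inner variation in Definition \ref{defcp} against $\phi_t(x)=x+tX(x)$. The perimeter term produces a surface contribution concentrated on $\bigcup_i\partial^*\Omega_i^\eps$, while the non-local term produces $\int S^\eps:\nabla X$, where $S^\eps$ differs from $T^\eps$ only by an interfacial remainder (a direct expansion using $-\Delta v_\eps+\kappa^2 v_\eps=\omega_\eps$ yields the pointwise algebraic identity $\operatorname{div} T^\eps=\omega_\eps\nabla v_\eps$). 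Thus the Euler-Lagrange condition takes the schematic form $\operatorname{div} T^\eps = -(\text{interfacial remainder})$.

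The main obstacle is to show that this interfacial remainder can be localized on a set $E_\delta^\eps$ with $\operatorname{Cap}_1(E_\delta^\eps)\to 0$ as $\eps,\delta\to 0$, and then to pass to the limit on the complement. I would follow and adapt the ball-type construction of \cite{SS5,SS3}: hypothesis \eqref{apriori1} bounds the $\mathcal{H}^{n-1}$-mass of $\bigcup_i\partial^*\Omega_i^\eps$, and this mass can be covered by a union of thin tubes whose $1$-capacity is controlled in terms of the mass (together with a suitable choice of tube thickness). Passing to the limit away from $E_\delta^\eps$ combines the strong $L^p$ convergence $v_\eps\to v$ with the weak $L^p$ convergence of $\nabla v_\eps$; the quadratic entries $\partial_i v_\eps\partial_j v_\eps$ pass to the limit in $\mathcal{D}'$ off the concentration set because any defect measure would be supported inside $E_\delta^\eps$. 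This is the higher-dimensional analogue of Delort-style concentration cancellation and constitutes the technically new step in the proof. The outcome is that $T$ is divergence-free in finite part in the sense of Definition \ref{divfreefin}.

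The four characterizations then follow from PDE analysis. Case 0 is the degenerate case where the droplet volume vanishes and $\omega\equiv 0$ directly from compactness. For Case 1, $\omega\in H^{-1}$ gives $v\in H^1$ and so $T\in L^1(\mathbb{T}^n)$; Proposition \ref{divfreenice} then upgrades the finite-part condition to $\operatorname{div} T=0$ in $\mathcal{D}'(\mathbb{T}^n)$. For Case 2, the assumption on $p$ simultaneously places $\omega$ into $H^{-1}$ (so Case 1 applies with $\operatorname{div} T = \omega\nabla v = 0$ in $L^1$) and $v$ into $W^{2,p}$ with $\nabla v$ continuous by embedding; testing against $\nabla v$ gives $\int\omega|\nabla v|^2=0$, hence $\nabla v\equiv 0$ on $\{\omega>0\}$, so $v$ is locally constant on its support $A$, which via the PDE forces $\omega\equiv\kappa^2 c$ on $A$. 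On the complement, $-\Delta v+\kappa^2 v=0$ with Cauchy data $(c,0)$ on $\partial A$, and Holmgren's unique continuation for this real-analytic elliptic equation forces $A=\mathbb{T}^n$; the normalization $\int\omega=1$ then gives $\omega=\mathbf{1}\,dx$. For Case 3, I would insert the ansatz $v(x)=b_i\Phi(|x-a_i|)+H_i(x)$ near each $a_i$ into the finite-part divergence condition tested against a cutoff of a constant vector field localized near $a_i$; the singular self-interaction from $\Phi$ cancels by a Pohozaev-type identity (the standard Bethuel-Brezis-H\'el\'ein computation \cite{BBH}), leaving precisely $\nabla H_i(a_i)=0$.
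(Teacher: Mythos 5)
Your architecture matches the paper's: compactness from the probability-measure bound, criticality rewritten as $\div S^{\eps}=0$ with $S^{\eps}-T^{\eps}$ supported on the droplet set, a covering of that set with controlled total $(n-1)$-power of the radii (hence vanishing $1$-capacity), passage to the limit off the exceptional set, and a case-by-case PDE analysis of the finite-part condition. However, the crux of the theorem -- why the quadratic terms $\partial_i v_{\eps}\,\partial_j v_{\eps}$ converge off a small set -- is not correctly justified in your sketch. Your claim that ``any defect measure would be supported inside $E^{\eps}_{\delta}$'' is unfounded: there is no reason the defect of $\nabla v_{\eps}\otimes\nabla v_{\eps}$ should concentrate on the interfacial covering. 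The paper's mechanism (Proposition \ref{msrconv}) is different: writing $-\Delta h_{\eps}+\kappa^2 h_{\eps}=\omega_{\eps}-\omega$, one truncates $h_{\eps}$ at a level interpolating between $\|\omega_{\eps}-\omega\|_{W^{-1,p}}$ and the total variation, tests the equation with the truncation, and obtains \emph{strong} $L^2$ convergence of $\nabla v_{\eps}$ outside the sublevel sets $F_{\eps}=\{|h_{\eps}|\ge \delta_{\eps}\}$, whose $1$-capacity is controlled by $\|h_{\eps}\|_{W^{1,p}}^p/\delta_{\eps}^p$. The final exceptional set is the \emph{union} of this set with the droplet covering; these are two genuinely different small sets, and conflating them leaves the concentration-cancellation step unproved. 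Relatedly, thin tubes around $\bigcup_i\partial^*\Omega_i$ do not suffice: the remainder $S^{\eps}-T^{\eps}$ contains terms such as $v_{\eps}\omega_{\eps}$ supported on the full droplet interiors, which is why Proposition \ref{sumballs} covers all of $\{u^{\eps}=+1\}$ by balls whose radii are nonetheless controlled by $\mathcal{H}^{n-1}$ of the boundary alone.

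A secondary issue is your Case 2 ending. Invoking Holmgren-type unique continuation across $\partial\{\omega>0\}$ with Cauchy data $(c,0)$ presupposes that this set is a hypersurface on which Cauchy data make sense, which a mere $L^p$ density does not provide (the support may have empty interior or wild boundary). The paper avoids this entirely: mollifying gives $\omega\nabla v=0$ a.e., hence $\omega=\kappa^2 v\,\mathbf{1}_{\{\nabla v=0\}}$ (using that $\Delta v=0$ a.e.\ on $\{\nabla v=0\}$), and multiplying by $v$ and integrating by parts over $\mathbb{T}^n$ forces $\nabla v\equiv 0$ globally, so $\omega$ is constant and equals $\mathbf{1}\,dx$ by normalization. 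Your Cases 1 and 3 coincide with the paper's arguments (Proposition \ref{divfreenice} for Case 1, and the flux computation of $T_{\omega}\cdot\nu$ over small spheres for the vanishing gradient property in Case 3) and are fine.
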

Theorem \ref{main2} is analogous to the results obtained for Ginzburg-Landau \cite{SS5,SS3}, with the droplets playing the role of the vortices in the magnetic
Ginzburg-Landau model. The main difference in our case is that we are dealing with sharp interface version of \eqref{EE} so that $u^{\eps}$ takes on only the values
$+1$ and $-1$. We must therefore be careful concerning regularity issues on the boundary of the set $\{u^{\eps}=+1\}$, and consequently use the theory of finite perimeter sets (Section \ref{finper}). Our proof, however, is in some ways simpler as we will have no contributions from the local terms outside the support of $\omega_{\eps}$. This is no longer true for \eqref{EE} in Section \ref{sec5}, and some additional analysis is needed. In addition, the vortices in the Ginzburg-Landau model are quantized, and we do not a priori know the shape or volume of the droplets in
this model. Theorem \ref{rounddroplets} in Section \ref{cpregularity} provides some information about the shape of these droplets; in particular, they are asymptotically round
as $\eps \to 0$ when $n=2$ under assumptions on the number of droplets and their isoperimetric deficit ratio. We will see later that this is easily seen to be
false for dimensions $n \geq 3$. 

\subsection{Interpretation of Theorem \ref{main2}}
The hypothesis \eqref{apriori1} is essential to our proofs, as it will be seen to imply that $\textrm{Cap}_1(\{u^{\eps}=+1\}) = o_{\eps}(1)$ as $\eps \to 0$. This allows
us to show that $\div T^{\eps}$ converges, in a distributional sense, outside of the set $\{u^{\eps}=+1\}$ to $\div T_{\omega}$. The smallness of the set $\{u^{\eps}=+1\}$ allows us to demonstrate
that the limiting tensor $T_{\omega}$ is divergence free in finite parts. 

The conditions of Cases 2 and 3
are simply consequences of the divergence free condition on $T_{\omega}$ (see Section \ref{sec4}). The condition \eqref{VGP} is called the `vanishing
gradient property', first established in the context of Ginzburg-Landau in \cite{BBH} where $\{(a_i, b_i)\}_i$ is a critical
point of the ``renormalized energy'' associated to the problem. The condition \eqref{VGP} can be interpreted as saying the sum of the Coulombic forces
from the neighboring droplets balance each other.

 When $\omega$ is regular enough (Case 2) and non-zero, then in fact it is equal to the uniform Lebesgue measure
on $\mathbb{T}^n$, meaning the droplets are uniformly distributed throughout the domain.
When we only know that $\omega \in H^{-1}(\mathbb{T}^n)$ as in Case 1 above, the measure
$\omega$ can be concentrated on lower dimensional hypersurfaces \cite{Aydi,NamLe,SS5}. This concentration phenomenon also occurs in the two-dimensional magnetic Ginzburg-Landau model where the limiting vortices of solutions, which bear much resemblance with the droplets in our case, can concentrate on lines \cite{Aydi,SS3}. Analysis concerning the existence of solutions to \eqref{vdef3} in a bounded domain $\Omega$ with $\omega$ concentrated on a smooth, closed curve $\Sigma \subset \subset \Omega$, and absolutely continuous with respect to the arc-length measure on $\Sigma$, is studied in \cite{NamLe}. In all cases, the above analysis shows that we can have $\omega \in H^{-1}(\Omega)$, while it is not in general true that $\omega << dx$. Here we demonstrate a simple example on $\mathbb{T}^n$ for the screened Poisson kernel (an example for the non-screened kernel can be similarly constructed).

\begin{example}
Let $w(s)$ be the Green's function of the operator $-\Delta + \kappa^2 I$ on $[-1,1)$ with periodic boundary conditions. Then $w$ is the unique periodic solution to
\begin{equation}\label{1dpde}
-w''(s) + \kappa^2 w(s) = \delta(s) \textrm{ on } [-1,1),
\end{equation}
where $\delta(s)$ is the dirac delta function at $s=0$. Set $v(x_1,\cdots,x_n) = w(x_1)$ and we have
\[ -\Delta v(x_1) + \kappa^2 v(x_1) = \delta(x_1) \textrm{ on } [-1,1)^n.\]
In this case the divergence free condition $\div T_{\omega}=0$ is
equivalent to requiring that
\[ \int_{-1}^{1} (-v_{x_1}^2+\kappa^2 v^2) \phi'(s) ds = 0 \textrm{ for all } \phi \in C^1([-1,1)) \textrm{ periodic}.\]
It is easy to see from standard elliptic methods that $w \in W^{1,\infty}([-1,1])$ and $w \in C^{\infty}(B_{\rho}(0)^c)$ for any fixed $\rho >0$. Thus
\begin{align}\label{ex1a} \int_{-1}^{1} (-v_{x_1}^2+\kappa^2 v^2) \phi'(s) ds =  \int_{-1}^{-\rho}(-v_{x_1}^2+\kappa^2 v^2) \phi'(s) ds  + \int_{\rho}^1 (-v_{x_1}^2+\kappa^2 v^2) \phi'(s) ds + o_{\rho}(1),\end{align}
as $\rho \to 0$. Observing that if $w(s)$ solves \eqref{1dpde} then so does $w(-s)$, we conclude from uniqueness of solutions to \eqref{1dpde} that $w$ is an even function and therefore that $w'$ is odd. Thus integrating by parts and using periodicity of $v$ we obtain
\begin{multline}\label{ex1b}  \int_{-1}^{-\rho}(-v_{x_1}^2+\kappa^2 v^2) \phi'(s) ds  + \int_{\rho}^1 (-v_{x_1}^2+\kappa^2 v^2) \phi'(s) ds\\ =(-v_{x_1}^2+\kappa^2 v^2)\phi\Big|_{x_1=+\rho}-(-v_{x_1}^2+\kappa^2 v^2)\phi\Big|_{x_1=-\rho}=0.\end{multline}
Combing \eqref{ex1a} and \eqref{ex1b} and then sending $\rho \to 0$, we conclude $\div T_{\omega} = 0$ in $\mathcal{D}'([-1,1)^n)$.
\end{example}
In the following section we recall that we say $E \subset \mathcal{A}$ (respectively $E \subset \mathcal{A}_m$) if
the characteristic function of $E$, $\chi_{E}$, belongs to $\mathcal{A}$ (respectively $\mathcal{A}_m$). 

\subsection{Main Result II: Asymptotic roundness of droplets}\label{cpregularity}

We begin by recalling the main result of \cite{GV}, applied specifically to the torus. For $\gamma \in \mathbb R$ we consider the more general functional $I_\gamma: \mathcal A \to \mathbb R$ given by
\begin{equation}\label{nonloceqn1} I_\gamma(E): = P(E) + \gamma \int_E\int_E G(x,y)\,dy\,dx + \int_{E} f(x)dx,\end{equation}
where $f \in C^2(\mathbb{T}^n)$, $\gamma \in \mathbb{R}$ is a constant parameter, $P(E)$ is the perimeter of $E$ (see Section \ref{finper}) and $G \in L^1(\mathbb{T}^n \times \mathbb{T}^n)$ is the kernel of the Laplacian on $\mathbb{T}^n.$ 

\noindent The reduced boundary of a set $E$ is said to be of class $C^{k,\alpha}$ if each point in $\partial^*E$ is locally contained in the graph of a function which is $C^{k,\alpha}$.
Our main result in \cite{GV} for the regularity of the reduced boundary is the following. 
  
\begin{theorem}\label{regularity}
 Let $E$ be a stationary point of the functional \eqref{nonloceqn1} in $\mathcal{A}$ or $\mathcal{A}_m$. Then the reduced boundary $\partial^* E$ belongs to the class $C^{3,1-n/p}$. In particular, the equation
\[ H(x) + 2 \gamma v_E + f(x) = \lambda,\]
holds strongly on $\partial^* E$ where $H$ is the mean curvature of $\partial^* E$, and
$\lambda$ is a Lagrange multiplier. When $E$ is a stationary point in the class $\mathcal{A}$, then $\lambda = 0$. Moreover, $\mathcal{H}^{n-1}(\partial E \backslash \partial^* E) = 0$.
\end{theorem}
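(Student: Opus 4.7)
The plan is to derive the Euler--Lagrange equation from the vanishing first variation of $I_\gamma$, identify $\partial^* E$ with an integer rectifiable varifold of bounded generalized mean curvature, apply Allard's regularity theorem to obtain initial $C^{1,\alpha}$ regularity of the reduced boundary, and finally bootstrap using Schauder estimates for the resulting quasilinear elliptic equation.

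First I would compute the first variation explicitly. For $X \in C^1(\mathbb{T}^n;\mathbb{R}^n)$ and $\phi_t(x) = x + tX(x)$, using the tangential divergence formula for the perimeter and the change of variables formula for the remaining terms, and writing $v_E(x) := \int_E G(x,y)\,dy$, one finds
$$\frac{d}{dt}\bigg|_{t=0} I_\gamma(\phi_t(E)) = \int_{\partial^* E}\mathrm{div}_\tau X\,d\mathcal{H}^{n-1} + \int_{\partial^* E}(2\gamma v_E + f)\,\nu_E\cdot X\,d\mathcal{H}^{n-1}.$$
Setting this to zero for all admissible $X$ (subtracting $\lambda \int_{\partial^* E} \nu_E\cdot X\,d\mathcal{H}^{n-1}$ in the constrained case $\mathcal{A}_m$, with $\lambda=0$ in the free case $\mathcal{A}$) identifies the generalized mean curvature vector of $\partial^* E$ as $H_\nu = (\lambda - 2\gamma v_E - f)\nu_E$. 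Since $-\Delta v_E = \chi_E - |E|$, standard elliptic regularity gives $v_E \in W^{2,p}(\mathbb{T}^n)$ for every $p < \infty$, hence $v_E \in C^{1,1-n/p}$ for $p > n$ by Morrey embedding; together with $f \in C^2$, this mean curvature belongs to $L^\infty$.

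Next, $\partial^* E$ carries the structure of an integer rectifiable $(n-1)$-varifold of multiplicity one with generalized mean curvature in $L^\infty$, so at each $x_0 \in \partial^* E$ the density equals one and, after restricting to a small enough ball, the tilt excess can be made arbitrarily small. Allard's regularity theorem then provides a $C^{1,\alpha}$ local graph representation $x_n = h(x')$ of $\partial^* E$ near $x_0$, with $\alpha$ determined by $p$. Substituting into the Euler--Lagrange equation, $h$ satisfies the quasilinear elliptic PDE
$$-\mathrm{div}\!\left(\frac{\nabla h}{\sqrt{1+|\nabla h|^2}}\right) = \lambda - 2\gamma v_E(x',h(x')) - f(x',h(x')),$$
whose right-hand side is initially $C^{0,\alpha}$. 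Schauder estimates give $h \in C^{2,\alpha}$, which upgrades the right-hand side to $C^{1,1-n/p}$ (using the $C^{1,1-n/p}$ regularity of $v_E$ and $f \in C^2$), and a second Schauder application yields $h \in C^{3,1-n/p}$, as claimed.

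For the assertion $\mathcal{H}^{n-1}(\partial E \setminus \partial^* E) = 0$, I would combine the uniform density lower bound for varifolds with bounded mean curvature (via the monotonicity formula) with the Allard-type conclusion that the set of points with density strictly greater than one has vanishing $\mathcal{H}^{n-1}$-measure, while the points of density one are regular and, after adjusting $E$ on a Lebesgue-null set, lie in $\partial^* E$. The main obstacle is the application of Allard's theorem itself---in particular verifying the small-tilt hypothesis uniformly on $\partial^* E$ and tracking the dependence of the Hölder exponent on the integrability index $p$; once this is in place, the Schauder bootstrap and the singular-set control are essentially routine.
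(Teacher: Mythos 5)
Your proposal follows essentially the same route as the paper's proof: compute the first variation, identify the generalized mean curvature vector as $(\lambda - 2\gamma v_E - f)\nu_E$, invoke De Giorgi's structure theorem and Allard's regularity theorem to obtain a local $C^{1,\alpha}$ graph near each point of $\partial^* E$, and bootstrap via elliptic regularity for the minimal surface operator to reach $C^{3,1-n/p}$. The only step the paper treats with more care than you do is the constrained case $\mathcal{A}_m$: rather than positing the Lagrange multiplier, it constructs an explicit volume-preserving variation by combining the flow of $X$ with that of an auxiliary field $Y$ satisfying $\int_E \mathrm{div}\, Y\,dx = 1$ and applying the implicit function theorem, which is how $\lambda$ is actually produced.
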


 The proof of Theorem \ref{regularity} follows essentially from Allard's regularity theorem and De Giorgi's structure theorem. Theorem \ref{regularity} applied to \eqref{E2a} with $n=2$, $E=\{u^{\eps}=+1\}$ and $f=-\frac{2\delta(\eps)}{\kappa^2}$ yields the equation
\begin{equation}\label{ELeqn2}
 \eps H_{\eps} - \frac{2 \delta(\eps)}{\kappa^2} + v_{\eps} \sum_j |\Omega_j|  = 0 \textrm{ on } \partial^*\{u^{\eps}=+1\}.
\end{equation}
We will use \eqref{ELeqn2} to show that when the number of droplets is finite and they have bounded isoperimetric deficit, they become asymptotically round as $\eps \to 0$ in $n=2$.\\ 

\noindent We recall that the Green's function on $\mathbb{T}^2$ can be written as
\begin{equation}\label{greenlog}
 G(x-y) = -\frac{1}{2\pi} \log|x-y| + S(x-y) \textrm{ for } x,y \in \mathbb{T}^2,
\end{equation}
where $S$ is a continuous function. If we consider a single round droplet so that $u^{\eps} = -1 + \chi_{B(x,r_{\eps})}$, then formally we expect from \eqref{ELeqn2} and \eqref{greenlog} that
\begin{equation}
 H_{\eps} \simeq \frac{\log r_{\eps}}{\eps}r_{\eps}^2 + \eps^{-1}\delta(\eps).
\end{equation}
When $\eps^{-1}\delta(\eps) = O(r_{\eps}^2 \log r_{\eps})$, as is the case for minimizers \cite{m:cmp10,gms10b,gms11b}, then we have 
\begin{equation}\label{curvest} H_{\eps} = O\(\frac{\log r_{\eps}}{\eps}r_{\eps}^2\) \textrm{ as } \eps \to 0.\end{equation}
Equation \eqref{curvest} provides us with a hint of what the correct scaling of $H_{\eps}$ should be as the droplets shrink to points.\\

 We now make the assumption that $u^{\eps} = -1 + \sum_{j=1}^{N(\eps)}\chi_{\Omega_j}$ for $N(\eps)=O(1)$ as $\eps \to 0$ so that the number of droplets
is constrained to be finite. In the case that $u^{\eps}$ is minimizing, it is shown in \cite{m:cmp10,gms10b,gms11b} that any two droplets stay sufficiently far apart, and this is due to the the Coulombic repulsion between droplets arising in the non-local term when bounds on the energy are assumed. This is no longer
true in our case, and we must account for the situation where multiple droplets converge to the same point in $\mathbb{T}^2$, while still finding an appropriate normalization of $H_{\eps}$ as the droplets shrink to points.  Motivated from the above discussion, we define 
\begin{equation}\label{genrad}
 \rho_{\eps} :=  \frac{-\eps}{\sum_{j=1}^{N(\eps)} \log P( \Omega_{j_i}) \sum_{j=1}^{N(\eps)} |\Omega_j|},
\end{equation}
to be the ``normalized radius" and 
\begin{equation}\label{bardelta}
\bar \delta := \liminf_{\eps \to 0} \frac{ -\delta(\eps)}{\sum_{j=1}^{N(\eps)} \log P(\Omega_{j_i})\sum_{j=1}^{N(\eps)} |\Omega_j|},
\end{equation}
 to be the ``normalized volume fraction". When we work in the scaling regime of minimizers as in \cite{m:cmp10,gms10b,gms11b} then it is shown that there exists a $\bar \delta_{cr} > 0$ such that whenever $\bar \delta > \bar \delta_{cr}$ we have
$P(\Omega_j) = O(\eps^{1/3}|\ln \eps|^{-1/3})$, $|\Omega_j| = O(\eps^{2/3}|\ln \eps|^{-2/3}|)$ and thus, when $N_{\eps}=O(1)$ as $\eps \to 0$,
\[ \rho_{\eps} = O(\eps^{1/3} |\ln \eps|^{-1/3}) = O\(r_{\eps}\) \textrm{ as } \eps \to 0,\]
where $r_{\eps}=3^{1/3}\eps^{1/3}|\ln \eps|^{-1/3}$ is the energetically preferred radius of a single droplet as shown in \cite{m:cmp10, gms10b,gms11b}. We have the following
Theorem concerning the asymptotic roundness of droplets when $n=2$ as $\eps \to 0$. 

\begin{theorem}\emph{\textbf{(Asymptotic roundness of droplets when $n=2$)}}\label{rounddroplets}
Assume the hypotheses of Theorem \ref{main2} and, in addition, that $u^{\eps} = -1 + 2\sum_{i=1}^{N(\eps)} \chi_{\Omega_i}$ for $N(\eps)=O(1)$ as $\eps \to 0$ with
bounded isoperimetric deficit:
\begin{equation}\label{isodef}
 \limsup_{\eps \to 0} \frac{\sum_{j=1}^{N(\eps)} \textrm{\em P}(\Omega_j)^2}{\sum_{j=1}^{N(\eps)}|\Omega_j|} < +\infty,
\end{equation}
and $\bar \delta \in (0,+\infty)$. Then there exists a $\bar \delta_{cr}$ such that for $\bar \delta > \bar \delta_{cr}$ the following holds.
 Let $\Omega_{j_i}$  have center of mass converging (subsequentially) to $a_i$ for $j_i=1,\cdots,d_i$. Then there exists a constant $c_i > 0$ such that
such that
\begin{align}
 \left \|\rho_{\eps} H_{\eps} -c_i\right\|_{L^{\infty}\(\cup_{j_i=1}^{d_i} \partial \Omega_{j_i}\)} \to 0 \textrm{ as } \eps \to 0,\end{align}
up to subsequences, where $H_{\eps}$ is the mean curvature of $\{u^{\eps}=+1\}$ and $\rho_{\eps} $ is given by \eqref{genrad}. 
\end{theorem}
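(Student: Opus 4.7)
The plan is to leverage the Euler--Lagrange equation \eqref{ELeqn2} provided by Theorem \ref{regularity}, which on each reduced boundary $\partial^*\Omega_{j_i}$ reads
\[
\eps H_\eps(x) = \frac{2\delta(\eps)}{\kappa^2} - v_\eps(x) M(\eps),
\]
where $M(\eps) = \sum_j |\Omega_j|$. Writing $L(\eps) = \sum_j \log P(\Omega_j) \to -\infty$ and multiplying by $\rho_\eps/\eps = -1/(L(\eps) M(\eps))$ reduces the theorem to proving that
\[
\rho_\eps H_\eps(x) = -\frac{2\delta(\eps)}{\kappa^2 L(\eps) M(\eps)} + \frac{v_\eps(x)}{L(\eps)}
\]
converges uniformly on $\bigcup_{j_i} \partial \Omega_{j_i}$ to a cluster-dependent constant. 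The first term tends, along the subsequence realising \eqref{bardelta}, to $2\bar\delta/\kappa^2$, so the real task is to show that $v_\eps(x)/L(\eps)$ converges uniformly in $x \in \partial \Omega_{j_i}$ to an $x$-independent quantity $c_i'$.

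For this I would use the decomposition \eqref{greenlog} and split
\[
v_\eps(x) M(\eps) = -\frac{1}{2\pi} \sum_k \int_{\Omega_k} \log|x-y| \, dy + \sum_k \int_{\Omega_k} S(x-y) \, dy,
\]
noting that the second sum is $O(M(\eps))$ and vanishes after division by $L(\eps) M(\eps)$. For the logarithmic part I separate droplets in a different cluster from those in the cluster $J_i$ of $x$. For $k \notin J_i$, $|x-y|$ stays bounded away from $0$ and from $\infty$ uniformly as $\eps \to 0$, so $\int_{\Omega_k} \log|x-y| \, dy = O(|\Omega_k|)$ uniformly in $x$ and, summed, contributes $O(M(\eps))$. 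For $k \in J_i$, I use the planar connected-set bound $\mathrm{diam}(\Omega_k) \le P(\Omega_k)/2$, combined with the isoperimetric deficit hypothesis \eqref{isodef}, to rescale each droplet by its perimeter and land in a compact class of BV sets of unit perimeter and bounded-below volume in $B_{1/2}$. A change of variables then yields
\[
\int_{\Omega_k} \log|x-y| \, dy = |\Omega_k| \log P(\Omega_k) + |\Omega_k| \cdot O(1),
\]
where the $O(1)$ is uniform in $x$ by standard potential estimates. Dividing by $L(\eps) M(\eps)$, the leading $x$-independent term produces
\[
c_i' = -\frac{1}{2\pi} \lim_{\eps \to 0} \frac{\sum_{k \in J_i} |\Omega_k| \log P(\Omega_k)}{L(\eps) M(\eps)},
\]
with the limit extracted along a subsequence, and the remaining errors are $O(1/|L(\eps)|) = o(1)$.

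The main obstacle I expect is controlling the cross-interaction contributions $\int_{\Omega_k} \log|x-y| \, dy$ for $k \ne j_i$ in the same cluster $J_i$, where two droplets can approach one another at a scale strictly faster than their diameters and so $\log|x-y|$ is not immediately uniform in $x \in \partial \Omega_{j_i}$. I would handle this by using the mean-value bound
\[
\left|\int_{\Omega_k} \bigl(\log|x-y| - \log|x'-y|\bigr) dy\right| \le |\Omega_k| \cdot \frac{P(\Omega_{j_i})}{\mathrm{dist}(\Omega_{j_i}, \Omega_k)},
\]
for $x, x' \in \partial \Omega_{j_i}$, and extracting a subsequence in which the rescaled pattern of droplets within each cluster converges, so that the ratio $P/\mathrm{dist}$ stabilises; the resulting contribution is then absorbed into the constant $c_i'$. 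The assumption $N(\eps) = O(1)$ supplies the needed compactness for this subsequence extraction, and the threshold $\bar\delta > \bar\delta_{cr}$ is chosen exactly as in \eqref{mubar} so that $c_i = 2\bar\delta/\kappa^2 + c_i'$ is genuinely non-trivial. This yields the uniform convergence $\rho_\eps H_\eps - c_i \to 0$ on $\bigcup_{j_i} \partial \Omega_{j_i}$ claimed in the theorem.
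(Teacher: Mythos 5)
Your proposal follows essentially the same route as the paper: the Euler--Lagrange equation supplied by Theorem \ref{regularity}, the logarithmic expansion \eqref{greenlog} of the Green's function with the non-cluster and smooth contributions absorbed into a bounded remainder, the rescaling of each droplet by its perimeter combined with the diameter bound $\mathrm{essdiam}(\bar\Omega_j)\le\tfrac12$ and the isoperimetric deficit \eqref{isodef} to isolate the leading term $\sum_{j\in J_i}|\Omega_j|\log P(\Omega_j)$, and the subsequential extraction realising $\bar\delta$ to define $\bar\delta_{cr}$. The only point of divergence is your explicit handling of the intra-cluster cross terms (mean-value bound plus a further subsequence in which the rescaled droplet pattern stabilises), a delicate point which the paper subsumes into the single estimate \eqref{A7}.
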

\begin{remark}
 The assumption \eqref{isodef} is required in order to ensure the next order
term in the expansion of the potential $v_{\eps}$ is controlled. In the case of minimizers as in \cite{m:cmp10,gms10b,gms11b}, bounds on the energy imply the condition
\eqref{isodef}. It is easy to see in dimensions $n \geq 3$ that the above statement is false, by taking any solution in $n=2$ and extending uniformly in the third direction we also
obtain a solution which is composed of tubes (and not spherical droplets). The proof works in dimension $n=2$ due to the specific scaling of the logarithmic potential, as can be seen by \eqref{greenlog} . Indeed, for very small droplets, the leading
order contribution from the potential $v_{\eps}$ is independent of the shape of the droplet. 
\end{remark}

\subsection{Main result III: The diffuse interface energy equation \eqref{EE}}\label{diffusesection}
For the diffuse interface energy \eqref{EE}, the analysis is very similar to that of the sharp interface energy \eqref{E2}, however we must use the unscreened kernel for the Laplace operator and thus define
\[ \tilde T_{ij} = - \partial_i v_{\eps} \partial_j v_{\eps} + \frac{1}{2} \delta_{ij} |\nabla v_{\eps}|^2,\]
where
\[v_{\eps}(x) = \int_{\mathbb{T}^n} G(x-y) \frac{1 + u^{\eps}(y)}{\delta(\eps)}dy,\]
and we make the particular choice $V(u) = \frac{1}{4}(1-u^2)^2$. 
We must now work in the class $\mathcal{A}_{\bar u}$ given by
\[ \mathcal{A}_{\bar u} := \left\{ u \in H^1(\mathbb{T}^n) : \int_{\mathbb{T}^n} u = \bar u\right\},\]
due to \eqref{neutr}.
For the energy \eqref{EE}, we define a critical point as follows.
\begin{definition}\label{diffusestationary}
A function $u \in \mathcal{A}_{\bar u}$ is said to be a critical point of \eqref{EE} if for any $v \in H^1(\mathbb{T}^n)$ satisfying
$\int_{\mathbb{T}^n} v = 0$ we have
\[\frac{d}{dt}\Big|_{t=0} \mathcal{E}^{\eps}(u + t v) = 0.\]
\end{definition}
\noindent A simple calculation along with standard elliptic theory reveals that $u^{\eps}$ is $C^{3,\alpha}$ and solves the elliptic equation
\begin{equation}
 -\eps^2 \Delta u^{\eps} + u^{\eps}(1-(u^{\eps})^2) + \delta(\eps) v_{\eps} = \lambda_{\eps} \textrm{ in } \mathbb{T}^n, 
\end{equation}
where $\lambda_{\eps}$ is the Lagrange multiplier corresponding to the volume constraint when taking variations in Definition \ref{diffusestationary}.

We show that if $u^{\eps} \in \mathcal{A}_{\bar u^{\eps}}$, with $\bar u^{\eps} = -1 + \delta(\eps)$, is a sequence of critical points of $\mathcal{E}^{\eps}$ with the perimeter of the minority phase vanishing, then $\tilde T^{\eps}$ converges up to a small set to the tensor $\tilde T_{\omega}$ with coefficients defined by
\begin{equation} \label{difften}\tilde T_{ij} = - \partial_i v\partial_j v + \frac{1}{2} \delta_{ij} |\nabla v|^2,\end{equation}
where now
\[-\Delta v = \omega - 1\textrm{ on } \mathbb{T}^n,\]
and $\omega$ is a probability measure on $\mathbb{T}^n$. More precisely, we prove the following.
\begin{theorem}\emph{\textbf{(Diffuse interface energy)}}\label{main3}
 Let $u^{\eps} \in \mathcal{A}_{\bar u^{\eps}}$ be a sequence of critical points of \eqref{EE} in the sense of Definition \ref{diffusestationary} which satisfy $\limsup_{\eps} |\lambda_{\eps}| < +\infty$ and
\begin{equation}\label{aprioriD}
\limsup_{\eps \to 0} \mathcal{H}^{n-1}\( \{u^{\eps} \geq -1 + \delta(\eps)^{1+\alpha}\}\) = 0 \textrm{ for } \alpha > 0,
\end{equation}
 with \begin{equation} \bar u^{\eps} = -1 + \delta(\eps) \textrm{ and } \delta(\eps) = o_{\eps}(1) \textrm{ as } \eps \to 0.\end{equation} Then for any $p \in (1,n/(n-1))$,  $ \omega_{\eps} := \frac{1+u^{\eps}}{\delta(\eps)}$ converges in $W^{-1,p}$ to a probability measure $\omega$ and
 $v_{\eps}$ converges in $W^{1,p}$ to $v$ where
\[-\Delta v = \omega - 1\textrm{ on } \mathbb{T}^n.\]
 Moreover, the symmetric 2-tensor $T_{\omega}$ with coefficients
 $T_{ij}$ given by \eqref{difften} is divergence free in finite part. In particular, cases 0., 1., 2. and 3. of Theorem \ref{main2} continue to hold for $\omega$.
\end{theorem}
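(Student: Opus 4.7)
The plan is to adapt the Sandier--Serfaty style proof of Theorem \ref{main2} to the diffuse setting. The structural difference is this: the local contribution to the stress tensor, which for the sharp interface energy was concentrated on the measure-zero reduced boundary $\partial^*\{u^\eps=+1\}$, now sits on the thin transition layer $A_\eps := \{u^\eps \geq -1 + \delta(\eps)^{1+\alpha}\}$. Hypothesis \eqref{aprioriD} says exactly that $\mathcal{H}^{n-1}(A_\eps) \to 0$, which in turn yields $\mathrm{Cap}_1(A_\eps) \to 0$; this is the input needed to apply the divergence-free-in-finite-part framework of Definition \ref{divfreefin}.

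First I would derive the exact stress-energy identity. By standard elliptic regularity $u^\eps \in C^{3,\alpha}$, and multiplying the EL equation $-\eps^2 \Delta u^\eps + V'(u^\eps) + \delta(\eps) v_\eps = \lambda_\eps$ by $\partial_j u^\eps$ gives
\[
\partial_i S^{\mathrm{loc}}_{ij,\eps} = (\delta(\eps) v_\eps - \lambda_\eps)\partial_j u^\eps, \quad S^{\mathrm{loc}}_{ij,\eps} := \eps^2 \partial_i u^\eps \partial_j u^\eps - \delta_{ij}\!\left(\tfrac{\eps^2}{2}|\nabla u^\eps|^2 + V(u^\eps)\right).
\]
Combining $u^\eps - \bar u^\eps = \delta(\eps)(\omega_\eps - 1) = -\delta(\eps)\Delta v_\eps$, the identity $\partial_i \tilde T^\eps_{ij} = (\omega_\eps - 1)\partial_j v_\eps$, and the Leibniz relation $\delta(\eps) v_\eps \partial_j u^\eps + \delta(\eps)(u^\eps - \bar u^\eps)\partial_j v_\eps = \delta(\eps)\partial_j[(u^\eps - \bar u^\eps)v_\eps]$, one checks that the tensor
\[
\tilde S^\eps_{ij} := S^{\mathrm{loc}}_{ij,\eps} + \delta(\eps)^2 \tilde T^\eps_{ij} - \delta_{ij}\!\left[\delta(\eps)(u^\eps - \bar u^\eps) v_\eps - \lambda_\eps u^\eps\right]
\]
is pointwise divergence free, $\partial_i \tilde S^\eps_{ij} = 0$ on $\mathbb{T}^n$.

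Second, I would establish convergence and localize the corrections. Outside $A_\eps$, $1 + u^\eps < \delta(\eps)^{1+\alpha}$, and a maximum-principle/comparison argument for the EL (using $|\lambda_\eps| \leq C$ and $V''(-1) > 0$) sharpens this to $|1 + u^\eps| + \eps|\nabla u^\eps| + \sqrt{V(u^\eps)} = o(\delta(\eps)^{1+\alpha})$ pointwise there, whence $|\omega_\eps| \leq C \delta(\eps)^\alpha$; on $A_\eps$ the bound \eqref{aprioriD} combined with a trace/Sobolev inequality gives the remaining uniform $W^{-1,p}$-bound for $p \in (1, n/(n-1))$. Since $\int_{\mathbb{T}^n} \omega_\eps = (1+\bar u^\eps)/\delta(\eps) = 1$ identically, the weak limit $\omega$ is a probability measure, and elliptic regularity for $-\Delta v = \omega - 1$ supplies $v_\eps \to v$ in $W^{1,p}$ and hence $\tilde T^\eps \to \tilde T_\omega$ in $L^{p/2}_{\mathrm{loc}}$. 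The pointwise decay above makes every term in $\tilde S^\eps - \delta(\eps)^2 \tilde T^\eps_{ij}$ of size $o(\delta(\eps)^2)$ in $L^1(\mathbb{T}^n \setminus A_\eps)$, including the Lagrange piece $\lambda_\eps u^\eps \delta_{ij}$, which enters only through $\lambda_\eps \nabla u^\eps$.

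Finally, setting $E_\sigma := A_{\eps(\sigma)}$ along a suitable selection $\sigma \mapsto \eps(\sigma)$, we obtain $\mathrm{Cap}_1(E_\sigma) \to 0$ from the standard inequality $\mathrm{Cap}_1 \lesssim \mathcal{H}^{n-1}$. Dividing $\partial_i \tilde S^\eps_{ij} = 0$ by $\delta(\eps)^2$ and testing against $\nabla \zeta$ for $\zeta \in C^\infty(\mathbb{T}^n)$ away from $E_\sigma$ as in Definition \ref{divfreefin}, the correction terms vanish in $L^1$, while the $W^{1,p}$-convergence of $v_\eps$ passes the leading piece $\tilde T^\eps$ to $\tilde T_\omega$. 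This establishes the divergence-free-in-finite-part property for each row of $\tilde T_\omega$. With this in hand, the characterizations in Cases 0, 1, 2, and 3 transfer \emph{verbatim} from the proof of Theorem \ref{main2}: those arguments invoked only the divergence-free-in-finite-part condition together with the PDE $-\Delta v = \omega - 1$, and the absence of the screening constant $\kappa^2$ affects none of the case analyses. The main obstacle is the pointwise refinement of \eqref{aprioriD} to the $o(\delta(\eps)^{1+\alpha})$ decay of $u^\eps + 1$ off $A_\eps$: extracting this from the bare $\mathcal{H}^{n-1}$-control requires a careful Modica-type equipartition combined with the maximum principle for the EL, exploiting that the quadratic nature of $V$ near $\pm 1$ creates a linearly stable potential well damping $u^\eps$ back toward $-1$ away from the transition layer.
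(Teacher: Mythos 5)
Your skeleton is the right one (stress--energy tensor from the EL equation, covering the transition layer by balls of small total $(n-1)$-radius via Proposition \ref{sumballs}, passing to divergence-free in finite part), but the two steps that carry the actual analytic content both have gaps. The more serious one is the limit of the quadratic tensor: you claim that $v_\eps\to v$ in $W^{1,p}$, $p\in(1,n/(n-1))$, ``passes the leading piece $\tilde T^\eps$ to $\tilde T_\omega$,'' giving convergence in $L^{p/2}_{\rm loc}$. Since $p<2$, $p/2<1$, so this is not $L^1_{\rm loc}$ convergence and cannot be tested against $\nabla\zeta$ in Definition \ref{divfreefin}; this is precisely the concentration--cancellation obstruction discussed in the introduction, and weak convergence of $\omega_\eps$ alone does not resolve it. The paper's proof hinges on Proposition \ref{msrconv}, which upgrades to \emph{strong} $L^2$ convergence of $\nabla v_\eps$ outside sets of vanishing $1$-capacity (via the truncation $\bar h_k=\max(-\delta_k,\min(h_k,\delta_k))$ and the interpolation between the $W^{-1,p}$ and $(C^0)^*$ norms of $\omega_\eps$). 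You never invoke this or any substitute, so the divergence-free-in-finite-part conclusion does not follow from what you have written.

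The second gap is the one you yourself flag as ``the main obstacle'': the pointwise bound $|1+u^\eps|+\eps|\nabla u^\eps|+\sqrt{V(u^\eps)}=o(\delta(\eps)^{1+\alpha})$ off $A_\eps$. This is unlikely to be provable by a maximum principle: the forcing $\delta(\eps)v_\eps$ in the EL equation is controlled only in $L^1$ (since $\omega_\eps$ is merely a probability measure, $v_\eps$ need not be bounded in $L^\infty$), so no sup-norm comparison is available; and in an $O(\eps)$-neighborhood of $\partial A_\eps$ equipartition gives $\eps|\nabla u^\eps|\sim\delta(\eps)^{1+\alpha}$ at best, not $o(\delta(\eps)^{1+\alpha})$. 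The paper sidesteps pointwise control entirely: it multiplies the EL equation by $\chi(u^\eps)+u^\eps$ for a piecewise-affine truncation $\chi$ and integrates by parts, which yields the purely integral estimate \eqref{STDiff1}, namely $\delta(\eps)^{-2}\int_{Z_\eps^c}\eps^2|\nabla u^\eps|^2+(1-(u^\eps)^2)^2=o_\eps(1)$, together with $\delta(\eps)^{-1}\int_{Z_\eps^c}|1+u^\eps|(|v_\eps|+|\lambda_\eps|)\le C\delta(\eps)^\alpha$; an $L^1$ bound on $Z_\eps^c$ is all the $L^1_\delta$ framework requires. I would redo your second step along these lines and insert Proposition \ref{msrconv} before concluding; the transfer of cases 0--3 from Theorem \ref{main2} is then fine as you describe.
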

\begin{remark}
The specific choice of $\delta(\eps)^{1+\alpha}$ in \eqref{aprioriD} is a technical limitation which is required in the proofs.\end{remark}

\section{Mathematical preliminaries: Sets of finite perimeter}\label{finper}

Here we introduce the basic notions of sets of finite perimeter. A detailed exposition on these topics can be found in \cite{maggi}. For a more
general treatment of varifolds, we refer the reader to \cite{Simons}.  Let $E \in \mathbb{R}^n$ be a Lebesgue measurable set. We say that
$E$ has finite perimeter if
\begin{equation}
\sup_{\substack{\varphi \in C_c^1(\mathbb{R}^n)\\ \|\varphi\|_{L^{\infty}} \leq 1}} \int_{E} \div \varphi  < +\infty.
\end{equation}
By the Riesz-Representation theorem, the above implies the existence of a vector valued Radon measure $\mu_E$ such that generalized Gauss-Green
formula  holds true
\[ \int_E \nabla \varphi = \int_{\mathbb{R}^n} \varphi d\mu_E \textrm{ for all } \varphi \in C_c^1(\mathbb{R}^n).\]
The measure $\mu_E$ is referred to as the Gauss-Green measure of $E$ and the total perimeter of the set $E$ is defined as
\[ P(E) = |\mu_E|(\mathbb{R}^n).\]

\noindent In the case that $E$ has a $C^1$ boundary, then we have
\begin{align*}
 \mu_E &= \nu_E \mathcal{H}^{n-1} \llcorner \partial E\\
P(E) &= \mathcal{H}^{n-1}(\partial E),
\end{align*}
and, in particular, we have
\[ \nu_E(x) = \lim_{r \to 0^+} \dashint_{B(x,r) \cap \partial E} \nu_E d\mathcal{H}^{n-1} = \lim_{r \to 0^+} \frac{\mu_E(B(x,r))}{|\mu_E|(B(x,r)}.\]

For a generic set $E$ of finite perimeter, we therefore define the \emph{reduced boundary}, denoted $\partial^*E$, as those $x \in \partial E$ such that the above limit exists and belongs to $S^{n-1}$.
The Borel vector field $\nu_E: \partial^*E \to S^{n-1}$ is called the \emph{measure theoretic unit normal of $E$}. When $\partial E$ is $C^1$, then
the measure-theoretic outer unit normal agrees with the classical definition.

\subsection{The first variation of perimeter}
We wish to define a one-parameter family of diffeomorphisms with initial velocity $X \in C_c^{1}(\Omega; \mathbb{R}^n)$
which is a collection $\{\phi_t\}_{t \in (-\varepsilon,\varepsilon)}$for $\eps > 0$ defined as 
\begin{equation}
 \phi_t(x) = x + t X(x), \textrm{ } x \in \Omega.
\end{equation}
We call $\{\phi_t\}_{-\eps < t < \eps}$ a \emph{local variation in} $\Omega$ associated with $X$ if in addition
\begin{equation}
 \phi_t(\Omega) \subset \subset \Omega.
\end{equation}
The \emph{first variation of perimeter} is then easily computed as (see  \cite{maggi, Gariepy, Simons})
\begin{equation}\label{pervar} \frac{d}{dt}\Big|_{t=0} P(\phi_t(E)) = \int \textrm{div}_{E} X d\mathcal{H}^{n-1}, \;\;\; X \in C_0^1(\Omega;\mathbb{R}^n),\end{equation}
where $\textrm{div}_E X$ is the tangential divergence of the vector field $X$ with respect to $E$:
\[ \textrm{div}_E X = \textrm{div} X - \nu_E(x) \cdot \nabla X(x) \nu_E(x).\]
Observe that the first variation is a linear functional on $C_0^1(\Omega;\mathbb{R}^n)$. In the special
case that it has a continuous extension to $C_0^0(\Omega;\mathbb{R}^n)$ it can be represented by a vector
valued Radon measure, which has a singular part with respect to $\mu_E$ and a non-singular part, using
the Radon-Nikodym theorem.

\noindent We thus have
\begin{equation}\label{gencurv}
 \int \textrm{div}_E X d\mathcal{H}^{n-1} = - \int X \cdot \vec H d\mathcal{H}^{n-1}- \int X \cdot  \nu_E d\sigma_E,
\end{equation}
where $|\vec H| \in L_{loc}^p (\partial^* E)$ and $\sigma_E$ denotes the singular part of the measure.
We call $\vec H$ the \emph{vector valued generalized mean curvature}. When we can write $\vec H = H \nu_E$, we call
$H$ the \emph{generalized mean curvature}. 

\section{Proof of Theorem \ref{main2}}\label{sec4}

As seen previously in Section 2 (cf. equation \eqref{cp1}), a direct computation yields
\begin{equation}\label{divT}
 \div T^{\eps} = \nabla v_{\eps} \omega_{\eps} \textrm{ in } L_{loc}^1(\mathbb{T}^n),
\end{equation}
where $T^{\eps}$ is the 2-tensor with coefficients $T_{ij}$ given by
\begin{equation}
 T_{ij} = -\partial_i v_{\eps} \partial_j v_{\eps} + \frac{1}{2}\(|\nabla v_{\eps}|^2 + \kappa^2 v_{\eps}^2 \) \delta_{ij}.
\end{equation}

As discussed in the beginning of Section \ref{sharpsetup}, we proceed by showing that the Euler-Lagrange equation obtained in Theorem \ref{regularity} is equivalent to the vanishing of a certain 2-tensor $S_{\eps}$. The part of $S^{\eps}$ which does not include $T^{\eps}$ will be shown to be concentrated on $\{u^{\eps}=+1\}$, which will
be shown to have vanishing 1-capacity as $\eps \to 0$, as a result of our assumption that $\mathcal{H}^{n-1}(\{u^{\eps}=+1\})$ vanishes
as $\eps \to 0$. The first step is the following
proposition, which has been adapted from \cite{SS3} and generalized to dimensions $n\geq 2$. The purpose of it will become clear in the proof of
Theorem \ref{main2}, where we will cover the set $\{u^{\eps}=+1\}$ by small balls and use the fact that the 1-capacity of a ball $B(x,r)$
is $\alpha_{n-1} r^{n-1}$ \cite{Gariepy}.

\begin{proposition}\label{sumballs}
 Assume $K$ is a compact subset of $\mathbb{R}^n$. Then there exists a finite covering of $K$ by closed balls $B_1,\cdots,B_k$ such that
\[\sum_k r(B_k)^{n-1} \leq C \mathcal{H}^{n-1}(\partial K).\]
\end{proposition}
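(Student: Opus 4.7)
The plan is to split the covering problem into two pieces: a cover of the topological boundary $\partial K$ obtained directly from the definition of $(n-1)$-Hausdorff measure, and a cover of the connected components of $\mathrm{int}(K)$ obtained via the isoperimetric inequality. The two contributions are then added and compared with $\mathcal{H}^{n-1}(\partial K)$.

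For the boundary piece, since $\mathcal{H}^{n-1}(\partial K)$ is finite and $\partial K$ is compact, the definition of Hausdorff measure furnishes, for any $\eta>0$, a finite family of closed balls $\{\bar B(x_i,r_i)\}_{i=1}^N$ covering $\partial K$ with $\alpha_{n-1}\sum_i r_i^{n-1} \leq (1+\eta)\,\mathcal{H}^{n-1}(\partial K)$, the radii as small as desired. After this step, every $x \in K \setminus \bigcup_i \bar B(x_i,r_i)$ lies in $\mathrm{int}(K)$.

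For the interior piece, I would decompose $\mathrm{int}(K)$ into its connected components $\{V_j\}$, which are open, pairwise disjoint, and satisfy $\partial V_j \subset \partial K$. For each $V_j$ that still meets $K \setminus \bigcup_i \bar B(x_i,r_i)$, I apply the classical isoperimetric inequality
\[ |V_j|^{(n-1)/n} \leq C\, \mathcal{H}^{n-1}(\partial V_j), \]
set $\rho_j := \mathcal{H}^{n-1}(\partial V_j)^{1/(n-1)}$, and cover $V_j$ by the balls $\{\bar B(y_\alpha, 2\rho_j)\}$ centered at a maximal $\rho_j$-separated subset $\{y_\alpha\}$ of $V_j$. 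Comparison of Lebesgue volumes shows that at most $C|V_j|\rho_j^{-n}$ such balls are needed, so
\[ \sum_\alpha (2\rho_j)^{n-1} \leq C|V_j|\rho_j^{-1} \leq C\,\mathcal{H}^{n-1}(\partial V_j), \]
where the last inequality again uses isoperimetric. Adding the two covers gives a covering of $K$ with
\[ \sum_k r(B_k)^{n-1} \leq C\,\mathcal{H}^{n-1}(\partial K) + C\sum_j \mathcal{H}^{n-1}(\partial V_j). \]

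The step I expect to be the main obstacle is controlling $\sum_j \mathcal{H}^{n-1}(\partial V_j)$ by $\mathcal{H}^{n-1}(\partial K)$ when there are infinitely many interior components. This is resolved by the decomposition theorem for sets of finite perimeter (as used already in the paper, cf.\ \cite{ambrosio2}), which guarantees that the perimeters of the connected components of a finite-perimeter set sum to at most its perimeter; combined with the essential inclusion $\partial^* V_j \subset \partial^* K$ this closes the estimate. Finally, compactness of $K$ together with the finiteness of the summed bound lets one extract a finite subcover at no additional cost, which yields the statement of the proposition.
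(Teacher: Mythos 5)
Your boundary step is fine, but the interior step contains a genuine error that the final estimate does not survive. The counting bound ``at most $C|V_j|\rho_j^{-n}$ balls are needed'' is false: the disjoint balls $B(y_\alpha,\rho_j/2)$ centered at a maximal $\rho_j$-separated subset of $V_j$ need not lie in $V_j$, nor contain a definite fraction of their volume inside $V_j$, so their number is controlled by the volume of the $\rho_j/2$-neighbourhood of $V_j$, not by $|V_j|$. More seriously, no single choice of radius can work: take $V_j$ a tube in $\mathbb{R}^3$ of length $L$ and cross-sectional radius $\epsilon\ll L$, so that $\mathcal{H}^{2}(\partial V_j)\sim \epsilon L$ and $\rho_j\sim(\epsilon L)^{1/2}$. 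Any cover of $V_j$ by balls of radius $2\rho_j$ needs at least $cL/\rho_j$ of them to cover the axis, giving $\sum_\alpha (2\rho_j)^{2}\geq c L\rho_j\sim \epsilon^{1/2}L^{3/2}$, which is not bounded by $C\epsilon L$ as $\epsilon\to 0$. A second, smaller gap: the decomposition theorem of \cite{ambrosio2} controls the sum of the \emph{perimeters} of the \emph{measure-theoretic} components, whereas you need $\sum_j\mathcal{H}^{n-1}(\partial V_j)$ for the \emph{topological} components of $\mathrm{int}(K)$ with their \emph{topological} boundaries; a point of $\partial K$ may lie on the boundary of many components, so this sum is not obviously controlled by $\mathcal{H}^{n-1}(\partial K)$.

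The missing idea is that the interior never needs to be covered separately. Cover $\partial K$ as you did from the definition of $\mathcal{H}^{n-1}$, then merge any two intersecting balls into one ball whose radius is at most the sum of the two radii (Lemma 4.1 of \cite{SS3}); since $r(B)^{n-1}\leq C(r(B_1)^{n-1}+r(B_2)^{n-1})$ this only changes the constant, and the resulting finitely many balls are closed and disjoint, so for $n\geq 2$ their complement $A$ is connected. Because $K$ is bounded, $A$ meets the complement of $K$; if $A$ also met $K$, connectedness would force it to meet $\partial K$, which is impossible since the balls cover $\partial K$. Hence the balls covering $\partial K$ already cover all of $K$, and the estimate follows with no interior contribution at all.
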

\begin{proof}
 Since $\partial K$ is compact it suffices to work with a finite covering, and then taking closures and using Lemma 4.1 of \cite{SS3}, we may
assume the balls are closed and disjoint, by possibly increasing the constant $C$ in the proposition. Indeed if $B_1$ and $B_2$ are two
balls which intersect, then there exists a ball $B$ containing $B_1 \cup B_2$ such that $r(B)\leq r(B_1)+r(B_2)$ and thus $r(B)^{n-1} \leq C(r(B_1)^{n-1}+r(B_2)^{n-1})$.

 In particular $A=\mathbb{R}^n \backslash \bigcup_{i=1}^k B_i$
is connected. Now if $B_1,\cdots,B_k$ cover $\partial K$, we claim they cover $K$. The claim follows by noting that $A$, which is connected, intersects
the compliment of $K$ since $K$ is bounded. Thus if $A$ intersected $K$ it would also intersect $\partial K$, which is impossible from the definition
of $A$. Thus $K \subset \mathbb{R}^n \backslash A = \bigcup_{i=1}^k B_i$. The result then follows by the definition of $n-1$ dimensional Hausdorff measure.
\end{proof}
We now finally define precisely what we mean by $L^1$ convergence `up to a small set'. This definition is taken from \cite{SS3}.
\begin{definition}\label{deltaconv}
 We say a sequence $\{X_k\}_k$ in $L^1(\Omega)$ converges in $L_{\delta}^1(\Omega)$ to $X$ if $X_k \to X$ in $L_{loc}^1(\Omega)$ except
on a set of arbitrarily small 1-capacity, or precisely if there exists a family of sets $\{E_{\delta}\}_{\delta>0}$ such that for any
compact $K \subset \Omega$,
\[ \lim_{\delta \to 0} \textrm{\em Cap}_1(K \cap E_{\delta}) = 0, \;\;\;\; \forall \delta > 0 \lim_{k \to +\infty} \int_{K \backslash E_{\delta}} |X_k-X| = 0.\]
We define similarly the convergence in $L_{\delta}^2$ by replacing $L^1$ by $L^2$ in the above.
\end{definition}

It is clear that $\nabla v^{\eps}$ cannot converge to $\nabla v$ strongly in $L^2$ in general, even if we have a uniform bound in $H^1(\mathbb{T}^n)$. 
However the fundamental observation is that away from a set of very small 1-capacity, we do in fact have strong $L^2$ convergence as long as the measures
 converge weakly in $(C(\mathbb{T}^n))^*$. The following result is adapted from \cite{SS3} to work in higher dimensions.
\begin{proposition}\label{msrconv}
 Assume $\{\alpha_k\}_k$ is a sequence of measures such that for some $p \in (1,n/(n-1))$
\[ \lim_{k \to +\infty} \|\alpha_k\|_{W^{-1,p}(\Omega)} \|\alpha_n\|_{C^0(\Omega)^*} = 0,\]
for $\Omega \subset \mathbb{R}^n$ bounded and open where $\|\alpha_k\|_{C^0(\Omega)^*}$ denotes the total variation of $\alpha_k$, $\int_{\Omega} |\alpha_k|$. Then letting
$h_k$ be the solution of
\[ -\Delta h_k + \kappa^2 h_k = \alpha_k  \mbox{ in } \Omega,\]
it holds that $h_k$ and $\nabla h_k$ converge to $0$ in $L_{\delta}^2(\Omega)$ . 
\end{proposition}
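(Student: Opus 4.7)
The plan is to combine standard elliptic estimates for $-\Delta + \kappa^2$ with a covering argument isolating the small set on which $h_k$ and $\nabla h_k$ can fail to vanish in $L^2$. Write $M_k := \|\alpha_k\|_{C^0(\Omega)^*} = \int_\Omega d|\alpha_k|$ and $\varepsilon_k := \|\alpha_k\|_{W^{-1,p}(\Omega)}$, so that the hypothesis reads $M_k \varepsilon_k \to 0$. Representing $h_k$ as a convolution with the Yukawa kernel $G_\kappa$ (satisfying $|G_\kappa(z)|\lesssim |z|^{2-n}$ for $n\ge 3$, with a logarithmic singularity when $n=2$, and $|\nabla G_\kappa(z)| \lesssim |z|^{1-n}$) makes clear that the only possible failure of strong convergence is concentrated near the support of $|\alpha_k|$, which is itself a small set.

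First I would establish the global weak statement $h_k \to 0$ in $W^{1,p}(\Omega)$. By the standard $W^{1,p}$ regularity theory for the screened Laplacian, $(-\Delta + \kappa^2)^{-1}$ maps $W^{-1,p}$ to $W^{1,p}$ continuously, so $\|h_k\|_{W^{1,p}(\Omega)} \leq C \varepsilon_k$. After passing to a subsequence, either $\varepsilon_k \to 0$ directly and this estimate gives $h_k \to 0$ in $W^{1,p}$ (and via Sobolev in $L^{p^*}$ with $p^* = np/(n-p) > 1$), or $\varepsilon_k$ stays bounded away from zero and the product condition forces $M_k \to 0$; the latter case is handled by the potential representation together with Proposition \ref{sumballs}, which ensures that the concentration set of $|\alpha_k|$ has $1$-capacity tending to zero and that $h_k$ is uniformly small off that set.

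Second, to upgrade the $W^{1,p}$ statement to $L^2$-convergence off a small capacity set, I would fix $\delta > 0$ and, for each $k$, select a finite covering of the essential support of $|\alpha_k|$ by closed balls $\{B(x_i^k, r_i^k)\}$ arranged so that $\sum_i (r_i^k)^{n-1} \le \delta$; Proposition \ref{sumballs} furnishes exactly such a covering, with the sum controlled in terms of an $\mathcal{H}^{n-1}$-quantity associated to the concentration set. Since $\textrm{Cap}_1(B(x,r)) = \alpha_{n-1} r^{n-1}$ and $1$-capacity is subadditive, the union $E_\delta^k$ has $\textrm{Cap}_1(E_\delta^k) \leq C\delta$, and a standard diagonal extraction produces a limiting set $E_\delta$ independent of $k$ with $\textrm{Cap}_1(E_\delta) \to 0$ as $\delta\to 0$. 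On $\Omega \setminus E_\delta$, direct Yukawa potential estimates yield uniform pointwise bounds $|h_k| + |\nabla h_k| \le C(\delta) M_k$ from the smoothness of $G_\kappa$ away from the diagonal, while interior elliptic regularity improves the $W^{1,p}$ bound to an $L^\infty$ bound of order $\varepsilon_k$ on the same complement. Interpolating these two types of estimates via the product $M_k\varepsilon_k\to 0$ delivers $h_k, \nabla h_k \to 0$ in $L^2(\Omega \setminus E_\delta)$, which is exactly convergence in $L^2_\delta(\Omega)$ in the sense of Definition \ref{deltaconv}.

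The main obstacle is achieving this simultaneous control: the covering $E_\delta$ must have small $1$-capacity uniformly in $k$, yet must be large enough that on its complement the residual potential inherits both the elliptic smallness $\varepsilon_k$ and a bound proportional to $M_k$. This delicate balancing is exactly what the product assumption $M_k\varepsilon_k \to 0$ permits, and the tension between the total variation norm (which controls the bulk of $h_k$ off the covering) and the $W^{-1,p}$ norm (which gives the rate of decay inside) is what makes the hypothesis of the proposition take its product form rather than asking for smallness of either factor alone.
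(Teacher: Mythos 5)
Your proposal has a genuine gap: the covering strategy at its core cannot be carried out under the stated hypotheses. You propose to cover the ``essential support of $|\alpha_k|$'' by balls with $\sum_i (r_i^k)^{n-1}\le\delta$ via Proposition \ref{sumballs}, but that proposition requires a compact set whose boundary has small $\mathcal{H}^{n-1}$ measure, and nothing in the hypotheses of Proposition \ref{msrconv} controls the support of $\alpha_k$. The only assumption is the smallness of the product $\|\alpha_k\|_{W^{-1,p}}\|\alpha_k\|_{C^0(\Omega)^*}$; the measures $\alpha_k$ may be absolutely continuous with support equal to all of $\Omega$ (and indeed, in the application the proposition is applied to $\omega_{\eps}-\omega$ where $\omega$ can be Lebesgue measure). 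Consequently the subsequent pointwise Yukawa-kernel bounds $|h_k|+|\nabla h_k|\le C(\delta)M_k$ off the covering, and the ``interpolation'' step combining them with an $L^\infty$ bound of order $\varepsilon_k$, have no foundation. A secondary issue: your case split is unnecessary and partially vacuous, since $W^{1,q}\hookrightarrow C^0$ for $q>n$ implies $\|\alpha_k\|_{W^{-1,p}}\le C\|\alpha_k\|_{C^0(\Omega)^*}$, hence $\varepsilon_k^2\le CM_k\varepsilon_k\to0$ and $\varepsilon_k\to0$ always.

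The missing idea is to define the exceptional set intrinsically as a superlevel set of $h_k$ rather than as a neighborhood of $\mathrm{supp}\,\alpha_k$. Set $\delta_k=\bigl(\varepsilon_k/(M_k+1)\bigr)^{1/2}$ and $F_k=\{|h_k|\ge\delta_k\}$. The capacity estimate $\mathrm{Cap}_p(F_k)\le C\|h_k\|_{W^{1,p}}^p/\delta_k^p\le C\varepsilon_k^{p/2}(M_k+1)^{p/2}\to0$ controls the size of $F_k$, and the truncation $\bar h_k=\max(-\delta_k,\min(h_k,\delta_k))$, which has $\nabla\bar h_k=0$ a.e.\ on $F_k$, serves as a test function in the weak formulation to give
\begin{equation*}
\int_{\Omega\setminus F_k}|\nabla h_k|^2+\kappa^2h_k^2\le\int_\Omega\bar h_k\,d\alpha_k\le\delta_kM_k\le(\varepsilon_kM_k)^{1/2}\to0.
\end{equation*}
This is where the product hypothesis actually enters, and it requires no structural information about $\alpha_k$ whatsoever. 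Your closing diagonal extraction ($E_\delta=\bigcup_{k>1/\delta}F_k$ along a subsequence with $\sum_k\mathrm{Cap}_1(F_k)<\infty$) is the right final step, but it must be fed by sets $F_k$ produced as above.
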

\begin{proof}
 We begin by noticing that $W^{1,q}$ embeds into $C^0$ for $q > n$, and thus the $(C^0)^*$ norm
dominates the $W^{-1,p}$ norm for $p \in (1,n/(n-1))$. Thus the hypothesis implies that $\|\alpha_k\|_{W^{-1,p}}$ tends to zero
as $k \to +\infty$. We let
\begin{align}\label{S1.1}
 \delta_k = \( \frac{\|\alpha_k\|_{W^{-1,p}}}{\|\alpha_k\|_{(C^0)^*}+1}\)^{1/2}, \;\;\; F_k = \{x \in \Omega | |h_k| \geq \delta_k\}.
\end{align}
Then we use the well known bound on p-capacity of $F_k$ (see \cite[Lemma 1]{Gariepy})
\begin{equation}\label{S1.2}
  \textrm{Cap}_p(F_k) \leq C \frac{\|h_k\|_{W^{1,p}}^p}{\delta_k^p}.
\end{equation}
Then by elliptic regularity we have $\|h_k\|_{W^{1,p}} \leq C \|\alpha_k\|_{W^{-1,p}}$ and so from \eqref{S1.1}--\eqref{S1.2} we have
\[ \textrm{Cap}_p(F_k) \leq C \|\alpha_k\|_{W^{-1,p}}^{p/2}(\|\alpha_k\|_{C^0(\Omega)^*} + 1)^{p/2},
\]
which therefore tends to $0$ as $k \to +\infty$. This implies that $\textrm{Cap}_1(F_k) \to 0$ as $n \to +\infty$. From a well known
property of Sobolev functions, the truncated function $\bar h_k = \max(-\delta_k, \min(h_k,\delta_k))$ satisfies $\nabla \bar h_k = 0$ a.e
in $F_k$, hence
\[ \int_{\Omega \backslash F_k} |\nabla h_k|^2 = \int_{\Omega} \nabla h_k \cdot \nabla \bar h_k.\]
It follows that
\[\int_{\Omega \backslash F_k} |\nabla h_k|^2 + \kappa^2h_k^2 \leq \int_{\Omega} \nabla h_k \cdot \nabla \bar h_k + \kappa^2 h_k \bar h_k = \int_{\Omega} \bar h_k d\alpha_k,\]
where the last equality follows from $-\Delta h_k + \kappa^2 h_k = \alpha_k$. The right hand side is bounded above by $\delta_k \|\alpha_k\|_{C^0(\Omega)^*}$, hence
by $\(|\alpha_k\|_{W^{-1,p}}\|\alpha_k\|_{C^0(\Omega)^*}\)^{1/2}$ and therefore tends to zero as $k \to +\infty$. Thus
\begin{equation}\label{l2}\lim_{k \to +\infty} \|h_k\|_{L^2(\Omega \backslash F_k)} = \lim_{k \to +\infty} \|\nabla h_k\|_{L^2(\Omega \backslash F_k)} = 0.\end{equation}
To conclude, since $\lim_{k \to +\infty} \textrm{Cap}_1(F_k) = 0$ there is a subsequence still denoted by $\{k\}$ so that
$\sum_k \textrm{Cap}_1(F_k) < +\infty$. We define
\[E_{\delta} = \bigcup_{k > \frac{1}{\delta}} F_k.\]
Then $\textrm{Cap}_1(E_{\delta})$ tends to zero as $\delta \to 0$ since it is bounded above by the tail of a convergent series. Moreover, for any
$\delta > 0$ we have $F_k \subset E_{\delta}$ when $k$ is large enough and therefore \eqref{l2} implies that $\lim_{k \to +\infty} \|h_k\|_{L^2(\Omega \backslash E_{\delta})} = \lim_{k \to +\infty} \|\nabla h_k\|_{L^2(\Omega \backslash E_{\delta})} = 0$.
\end{proof}

We will see in the proof of Theorem \ref{main2} that Proposition \ref{msrconv} implies that $T^{\eps}$ converges to $T$ in $L_{\delta}^1(\mathbb{T}^n)$. The proof of Theorem \ref{main2} then follows after applying the following proposition contained in \cite{SS3}.

\begin{proposition}\label{divfree}
 Assume $\{T_k\}_{k \in \mathbb{N}}$ is a sequence of divergence-free vector fields which converge to $T$ in $L_{\delta}^1(\mathbb{T}^n)$. Then $T$
is divergence-free in finite part. 
\end{proposition}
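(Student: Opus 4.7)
The plan is to verify Definition \ref{divfreefin} directly, using as the exceptional family $\{E_\delta\}$ precisely the one supplied by the $L^1_\delta$ convergence of $T_k$ to $T$. Conditions (1) and (2) of the definition will then be essentially free: (1) is part of the definition of $L^1_\delta$ convergence, and (2) follows from the fact that $T_k \to T$ in $L^1(\mathbb{T}^n \setminus E_\delta)$ forces $T \in L^1(\mathbb{T}^n \setminus E_\delta)$ for each fixed $\delta$. All the work will go into verifying the integral identity (3).

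To establish (3), I would fix $\zeta \in C^\infty(\mathbb{T}^n)$, set $F_\delta = \zeta^{-1}(\zeta(E_\delta))$, and build a family of smooth cutoffs on the range of $\zeta$ in $\mathbb R$. After replacing each $E_\delta$ by a slightly enlarged open set (whose $\textrm{Cap}_1$ still tends to zero), the closure $A_\delta := \overline{\zeta(E_\delta)}$ may be taken to be a compact subset of $\mathbb R$. For each $\eta > 0$, I would choose $\chi_\eta \in C^\infty(\mathbb R)$ whose derivative $\psi_\eta := \chi_\eta'$ vanishes on the $\eta/2$-neighborhood of $A_\delta$, equals $1$ outside the $\eta$-neighborhood, and takes values in $[0,1]$ in between. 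Then $\chi_\eta \circ \zeta \in C^\infty(\mathbb{T}^n)$ is a legitimate test function for the distributional divergence of $T_k$, and since $\div T_k = 0$,
\[
0 \;=\; \int_{\mathbb{T}^n} T_k \cdot \nabla(\chi_\eta \circ \zeta)\, dx \;=\; \int_{\mathbb{T}^n} \psi_\eta(\zeta)\, T_k \cdot \nabla \zeta\, dx.
\]

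The crucial observation is that the support of $\psi_\eta(\zeta)$ is contained in $\zeta^{-1}\bigl(\mathbb R \setminus (A_\delta)_{\eta/2}\bigr)$, which is disjoint from $E_\delta$ because $E_\delta \subset \zeta^{-1}(A_\delta)$. On this support $T_k \to T$ in $L^1$, while $\psi_\eta(\zeta)\nabla \zeta$ is bounded, so I can pass to the limit $k \to \infty$ to obtain $\int_{\mathbb{T}^n} \psi_\eta(\zeta)\, T \cdot \nabla \zeta\, dx = 0$. Sending $\eta \to 0^+$, one has $\psi_\eta(\zeta(x)) \to \mathbf{1}_{\mathbb{T}^n \setminus F_\delta}(x)$ pointwise, and dominated convergence with the dominant $|T|\,|\nabla \zeta| \in L^1(\mathbb{T}^n \setminus E_\delta)$ yields the desired identity $\int_{\mathbb{T}^n \setminus F_\delta} T \cdot \nabla \zeta = 0$. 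The only real technical obstacle is ensuring $A_\delta$ is regular enough for the cutoff construction, which is why one needs to enlarge $E_\delta$ slightly to an open set and pass to closures under $\zeta$; choosing $E_\delta$ to be a union of open sublevel sets as in the proof of Proposition \ref{msrconv} handles this cleanly. Aside from this book-keeping, the argument is a direct adaptation to higher dimensions of the corresponding two-dimensional statement in Chapter 13 of \cite{SS3}.
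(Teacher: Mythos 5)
The paper does not actually prove this proposition; it imports it verbatim from \cite[Chapter 13]{SS3}, so there is no in-text argument to compare yours against. Your overall strategy is the right one and is the same as the source's: reuse the family $\{E_\delta\}$ furnished by the $L^1_\delta$ convergence (conditions 1 and 2 of Definition \ref{divfreefin} are then immediate), test $\div T_k=0$ against compositions $f\circ\zeta$, observe that the resulting integrand is supported away from $E_\delta$, and pass to the limit in $k$ there. The $k\to\infty$ and $\eta\to 0$ limits are correctly justified by the $L^1$ convergence off $E_\delta$ and dominated convergence.

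There is, however, a concrete slip at the final step. With $\psi_\eta$ vanishing on a neighborhood of $A_\delta:=\overline{\zeta(E_\delta)}$, the pointwise limit of $\psi_\eta(\zeta(x))$ as $\eta\to 0$ is $\mathbf{1}_{\mathbb{T}^n\backslash \zeta^{-1}(\overline{\zeta(E_\delta)})}(x)$, not $\mathbf{1}_{\mathbb{T}^n\backslash F_\delta}(x)$: the set $F_\delta=\zeta^{-1}(\zeta(E_\delta))$ required by Definition \ref{divfreefin} uses the image itself, which for the relevant $E_\delta$ (countable unions of closed balls) is $\sigma$-compact but in general not closed, and the discrepancy $\zeta^{-1}\bigl(\overline{\zeta(E_\delta)}\backslash\zeta(E_\delta)\bigr)$ need not be Lebesgue-negligible. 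So as written you prove the identity on a possibly strictly smaller set. Your proposed remedy does not close this: enlarging $E_\delta$ to an open set does not make $\zeta(E_\delta)$ closed, and replacing $E_\delta$ by its closure can destroy the capacity bound. The clean fix, which is what \cite{SS3} does, is to note that $\int_{\mathbb{T}^n} g(\zeta)\, T_k\cdot\nabla\zeta\,dx=0$ for \emph{every} smooth $g$ (every such $g$ is $f'$ for some smooth $f$), so the pushforward of the finite signed measure $T_k\cdot\nabla\zeta\,dx$ under $\zeta$ is identically zero; hence $\int_{\zeta^{-1}(A)}T_k\cdot\nabla\zeta=0$ for every Borel $A\subset\mathbb{R}$, in particular for $A=\mathbb{R}\backslash\zeta(E_\delta)$, which gives $\int_{\mathbb{T}^n\backslash F_\delta}T_k\cdot\nabla\zeta=0$ exactly. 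Since $E_\delta\subset F_\delta$, one then passes to the limit in $k$ directly on $\mathbb{T}^n\backslash F_\delta$ using the $L^1$ convergence off $E_\delta$, with no regularization in $\eta$ and no closures needed.
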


We are now ready to present the proof of Theorem \ref{main2}. The characterizations of $\omega$ in items 0,1,2,3 will be contained in Propositions \ref{omega1}
and \ref{omega2} below.\\
\medskip

\noindent \emph{Proof of first part of Theorem \ref{main2}:}
We begin by observing that if we define $J^{\eps}$ to be the 2-tensor with coefficients $J_{ij} = ( \delta_{ij} - \nu_i \nu_j)|\mu_{\eps}|$, where $\mu_{\eps}$ is the Gauss-Green
measure of $\{u^{\eps}=+1\}$ as in Section \ref{finper}, we have
\begin{equation}\int_{\partial \{u^{\eps}=+1\}} \div_E X d\mathcal{H}^{n-1}
 = \int_{\mathbb{T}^n} (\div_E X - \partial_iX^j\nu_i\nu_j )\,d|\mu_{\eps}|
 = \int_{\mathbb{T}^n} J_{ij} \partial_i X^j.\label{perfirstvar}\end{equation}

By Theorem \ref{regularity} applied to \eqref{E2a}, \eqref{divT} and \eqref{perfirstvar}, we claim the criticality condition for $E^{\eps}$ can be written as
\[ \div S^{\eps} = 0 \textrm{ in } \mathcal{D}'(\mathbb{T}^n),\]
where $S_{\eps}$ is the 2-tensor given by
\begin{equation}
 S_{ij}^{\eps} = T_{ij}^{\eps} - \frac{\eps}{b_{\eps}^2} \( \delta_{ij} - \nu_i \nu_j\)|\mu_{\eps}|  +  \frac{1}{b_{\eps}^2}\delta_{ij} \frac{(u_{\eps}+1)\delta(\eps)}{\kappa^2 } - \delta_{ij} v_{\eps} \omega_{\eps},
\end{equation}
where we've set $b_{\eps} = \sum_j |\Omega_j|$. Indeed, applying Theorem \ref{regularity} to \eqref{E2a} with $E=\{u^{\eps}=+1\}$, $f=-\frac{2\delta(\eps)}{\kappa^2}$,  $\Omega=\mathbb{T}^n$ with Green's potential $G$ of $\mathbb{T}^n$ we have

\[ \frac{\eps}{b_{\eps}^2} H_{\eps} \mu_{\eps} - \frac{\delta(\eps)}{b_{\eps}^2 \kappa^2} \mu_{\eps} + \frac{1}{b_{\eps}} v_{\eps}\mu_{\eps} = 0 \textrm{ in } \mathcal{D}'(\mathbb{T}^n).\]
Using \eqref{divT} and \eqref{perfirstvar}, a direct computation yields
\begin{equation}
 \div S^{\eps} = \nabla v_{\eps} \omega_{\eps} - \frac{\eps}{b_{\eps}^2} H_{\eps} \mu_{\eps} + \frac{\delta(\eps)}{b_{\eps}^2 \kappa^2} \mu_{\eps} - \nabla v_{\eps} \omega_{\eps} - \frac{1}{b_{\eps}} v_{\eps}\mu_{\eps}=0 \textrm{ in } \mathcal{D}'(\mathbb{T}^n).
\end{equation}
From Proposition \ref{sumballs}, there exists
a collection of balls $B_1,\cdots,B_k$ which cover $\{u^{\eps}=+1\}$ with $\sum_{i=1}^k r(B_i)^{n-1} \leq C \mathcal{H}^{n-1}(\{u^{\eps}=1\})$. Define $Z_{\eps}$ to be the union of these balls. Then we have
\[ S_{\eps} = T_{\eps} \textrm{ in } Z_{\eps}^c.\]
By subadditivity of the 1-capacity \cite{Gariepy} and the fact that the 1-capacity of a ball $B(x,r)$ is $\alpha_{n-1}r^{n-1}$ \cite{Gariepy} we have via the vanishing of $P(\{u^{\eps}=+1\})$ (cf. equation \eqref{apriori1}) that
\[ \textrm{Cap}_1 (Z_{\eps}) \to 0, \,\,\,\, \int_{\mathbb{T}^n \backslash Z_{\eps}} |S_{\eps} - T_{\eps}| = 0.\]

Now choose a decreasing subsequence $\{\eps_k\}$ tending to zero such that $\sum_k \textrm{Cap}_1(Z_{\eps_k}) < +\infty$ and let
\[ E_{\delta} = \bigcup_{k > \frac{1}{\delta}} Z_{\eps_k}.\]

\noindent Finally we define
\begin{equation}\label{deltadef1} F_{\delta} := E_{\delta} \cup \tilde E_{\delta},\end{equation}
where, in view of the defintion of $L_{\delta}^2$ convergence (cf. Definition \ref{deltaconv}), $\tilde E_{\delta}$ are the sets given by Proposition \ref{msrconv}. Then once again by subadditivity of capacity we have 
\[\lim_{\delta \to 0} \textrm{ Cap}_1(F_{\delta}) = 0.\]

Since $\omega_{\eps}$ is a family of probability measures on $\mathbb{T}^n$, we have $\omega_{\eps} \to \omega$ weakly in $(C^0(\mathbb{T}^n))^*$ up to a subsequence, and thus $\omega_{\eps} \to \omega$ strongly
in $W^{-1,p}$ for $p \in (1,n/(n-1))$ via the compact embedding $(C^0(\mathbb{T}^n))^* \subset \subset W^{-1,p}$ which follows from the
compact embedding $W^{1,q}(\mathbb{T}^n) \subset \subset C^0(\mathbb{T}^n)$ for $q > n$. From Proposition \ref{msrconv} we therefore conclude
\[ \nabla v_{\eps} \to \nabla v \textrm{ in } L_{\delta}^2(\mathbb{T}^n).\]

\noindent Thus, recalling $S_{\eps}=T_{\eps}$ in $F_{\delta}^c$ we have
\[ S_{\eps} - T_{\omega} \textrm{ converges to } 0 \textrm{ in } L_{\delta}^1(\mathbb{T}^n),\]
where the sets $F_{\delta}$ in Definition \ref{divfreefin} are given by \eqref{deltadef1}. Thus $T_{\omega}$ is divergence free in finite part from
Proposition \ref{divfree}. $\Box$

\noindent It now remains to prove the characterizations of Theorem \ref{main2}, ie. items 0, 1, 2 and 3, which we divide
into Propositions \ref{omega1} and \ref{omega2} below.

\begin{proposition}\label{omega1} Let $-\Delta v + v = \omega \in H^{-1}(\mathbb{T}^n)$ and that $T_{\omega}$ is divergence free in finite parts. Then it holds distributionally that
 \[ \div T_{\omega} = 0.\]
Moreover we have the following
\begin{itemize}
\item If $n=2$ then $v \in W^{1,\infty}$.
\item If, in addition, $\omega \in L^p$ for $p \geq \frac{2n}{n+1}$ when $n>2$ and $p \geq 1$ for $n=2$, then 
\[ \omega = \textbf{1}dx.\]
\end{itemize}
\end{proposition}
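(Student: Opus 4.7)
The plan is to establish the three assertions in turn.

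\textbf{Step 1 (distributional $\div T_\omega = 0$).} Since $\omega \in H^{-1}(\mathbb{T}^n)$, standard elliptic theory for $-\Delta + \kappa^2 I$ on the torus gives $v \in H^1(\mathbb{T}^n)$, and hence each entry of $T_\omega$, a quadratic polynomial in $v$ and $\nabla v$ with $v^2, |\nabla v|^2 \in L^1$, lies in $L^1(\mathbb{T}^n)$. Since $T_\omega$ is divergence-free in finite part by hypothesis, Proposition \ref{divfreenice} applied row-by-row gives $\div T_\omega = 0$ in $\mathcal{D}'(\mathbb{T}^n)$.

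\textbf{Step 2 ($v \in W^{1,\infty}$ when $n=2$).} A direct distributional calculation using the explicit form of $T_\omega$ yields
\begin{equation*}
\div T_\omega \;=\; (-\Delta v + \kappa^2 v)\,\nabla v \;=\; \omega\,\nabla v,
\end{equation*}
so Step 1 is equivalent to the ``force balance'' $\omega\,\nabla v = 0$ in $\mathcal{D}'(\mathbb{T}^2)$. To upgrade the regularity of $v$, I would exploit the conformal structure available only in $n=2$. Writing $f := \partial_z v$ with $\partial_z = \tfrac{1}{2}(\partial_1 - i\partial_2)$, a short calculation gives
\begin{equation*}
\partial_{\bar z}\bigl(f^2\bigr) \;=\; \tfrac{1}{2} f\,\Delta v \;=\; \tfrac{1}{2} f\,(\kappa^2 v - \omega),
\end{equation*}
and the distributional identity $\omega\,\nabla v = 0$ kills the $\omega f$ contribution on the right. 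What remains is a $\bar\partial$/Beltrami-type equation for the Hopf differential $f^2$ whose right-hand side is only quadratic in $v$ and $\nabla v$. Combining Calder\'on--Zygmund estimates for $\bar\partial$ on $\mathbb{T}^2$ with the critical two-dimensional Sobolev embeddings bootstraps $f \in L^2 \to L^q$ for every $q < \infty$, and a Gehring-type self-improvement (reverse H\"older) then produces $f \in L^\infty$, i.e.\ $v \in W^{1,\infty}(\mathbb{T}^2)$.

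\textbf{Step 3 (identification $\omega = \mathbf{1}\,dx$).} Either the $W^{1,\infty}$ bound from Step 2 (when $n=2$) or elliptic regularity with $\omega \in L^p$, $p \geq 2n/(n+1)$ (when $n>2$), guarantees $\nabla v \in L^{p'}$ with $1/p + 1/p' \leq 1$, so that the distributional product $\omega\,\nabla v$ is a genuine $L^1$ function. Hence $\omega\,\nabla v = 0$ almost everywhere, giving $\nabla v = 0$ a.e.\ on $\{\omega \neq 0\}$. Stampacchia's theorem applied to each $\partial_i v \in W^{1,p}$ forces $D^2 v = 0$ a.e.\ on that set, so $\Delta v = 0$ a.e.\ there and the PDE yields $\kappa^2 v = \omega$ a.e.\ on $\{\omega \neq 0\}$. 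Testing $-\Delta v + \kappa^2 v = \omega$ against $v$ gives
\begin{equation*}
\int_{\mathbb{T}^n}\bigl(|\nabla v|^2 + \kappa^2 v^2\bigr)\,dx \;=\; \int_{\{\omega \neq 0\}} v\,\omega\,dx \;=\; \kappa^2 \int_{\{\omega \neq 0\}} v^2\,dx,
\end{equation*}
so $\int |\nabla v|^2 = 0$ and $v = 0$ a.e.\ on $\{\omega = 0\}$. The first forces $v$ to be a constant $c$, and the assumption $\omega \not\equiv 0$ then forces $|\{\omega = 0\}| = 0$; consequently $\omega = \kappa^2 c$ is constant, and the probability-measure normalization together with $|\mathbb{T}^n| = 1$ pins the constant at $1$.

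\textbf{Main obstacle.} The crux is Step 2: upgrading $H^1$ to $W^{1,\infty}$ in $n=2$ solely from $\omega \in H^{-1}$ and the divergence-free structure of $T_\omega$. No off-the-shelf elliptic regularity for $-\Delta + \kappa^2 I$ suffices, and the argument relies essentially on the near-holomorphicity of the Hopf differential special to two dimensions; adapting this to the screened operator requires careful handling of the lower-order $\kappa^2 v$ term during the bootstrap.
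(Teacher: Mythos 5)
Your overall architecture matches the paper's: the distributional statement follows from Proposition \ref{divfreenice} applied row by row, and the identification $\omega=\mathbf{1}\,dx$ comes from forcing $\omega\,\nabla v=0$ a.e., using the a.e.\ vanishing of $\Delta v$ on $\{\nabla v=0\}$ (Stampacchia), and testing the PDE against $v$. Two points, however, need repair.

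First, in Step 2 you write $\div T_\omega=\omega\,\nabla v$ ``in $\mathcal{D}'(\mathbb{T}^2)$'' and then let ``$\omega\,\nabla v=0$'' kill the $\omega f$ term in $\partial_{\bar z}(f^2)=\tfrac12 f(\kappa^2 v-\omega)$. For $\omega\in H^{-1}$ and $v\in H^1$ only, the product $\omega\,\nabla v$ is not a well-defined distribution ($\omega$ may charge a curve on which $\nabla v$ has no trace), so this step is circular: the entire point of the tensor formulation is to avoid forming that product. The correct route — and the one in \cite{SS3}, to which the paper simply defers for $n=2$ — is to observe that $f^2=(\partial_z v)^2$ is a linear combination of the \emph{entries} of $T_\omega$ (which are honest $L^1$ functions) plus $\tfrac{\kappa^2}{4}v^2$, so that $\div T_\omega=0$ gives directly $\partial_{\bar z}(f^2)=\tfrac{\kappa^2}{2}\,v\,\partial_{\bar z}v\in L^q$ for some $q>1$, with no reference to $\omega$. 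The subsequent bootstrap is then a decomposition of $f^2$ into a holomorphic part plus a Cauchy transform of the right-hand side (Calder\'on--Zygmund for $\bar\partial$ and Sobolev embedding), not a Gehring-type reverse H\"older inequality, which would not yield $L^\infty$.

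Second, in Step 3 you pass from ``$\div T_\omega=0$ in $\mathcal{D}'$'' to ``$\omega\,\nabla v=0$ a.e.''\ by asserting that the product is an $L^1$ function. Integrability of the product does not by itself identify it with the distributional divergence of $T_\omega$: the entries of $T_\omega$ are quadratic in $\nabla v$, and quadratic expressions do not commute with weak limits. The paper supplies the missing argument: mollify, note $\div T_k=\omega_k\nabla v_k$ exactly for the smooth approximations, and use that $\omega_k\to\omega$ in $L^p$ and $\nabla v_k\to\nabla v$ in $L^{p/(p-1)}$ (here the threshold $p\geq 2n/(n+1)$, via $W^{2,p}\hookrightarrow W^{1,p'}$, is exactly what makes H\"older work) to conclude $\omega_k\nabla v_k\to\omega\nabla v$ and $T_k\to T_\omega$ in $L^1_{loc}$ simultaneously, whence $\omega\nabla v=\lim\div T_k=\div T_\omega=0$. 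With these two repairs your argument closes; the remainder of Step 3 coincides with the paper's.
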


\begin{proof}
 When $n=2$ for both cases, see \cite{SS3}. The proofs are very similar to \cite{SS3} but we generalize them for arbitrary dimension. First observe that $\div T_{\omega}=0$ is an immediate consequence of Proposition \ref{divfreenice}. When $n>2$, if $\omega \in L^p$ for $p \geq \frac{2n}{n+1}$ then $\nabla v \in L^q$ for $q \leq \frac{p}{p-1}$
by standard elliptic theory. Let $\omega_n = \omega * \rho_n$  where $\{\rho_n\}_n$ is a regularizing kernel and define $v_k= v * \rho_k$ and let
$T_k$ be the tensor with coefficients $-\partial_i v_k \partial_j v_k + \frac{1}{2} (|\nabla v_k|^2 + v_k^2)\delta_{ij}$. Then $\omega_k$ tends
to $\omega$ in $L^p$ and since $\nabla v \in L^{p/(p-1)}$ , $\nabla v_n$ tends to $\nabla v$ in $L^{p/(p-1)}$. By H\"{o}lder's inequality we obtain
\[ \omega_k \nabla v_k \to \omega \nabla v, \;\;\; T_n \to T_{\omega} \textrm{ in } L_{loc}^1(\mathbb{T}^n).\]
It follows that $\div T_k \to \div T_{\omega} = 0$ and that $\omega_k \nabla v_k \to \omega \nabla v$ in $\mathcal{D}'(\mathbb{T}^n)$. Since 
$\div T_k = \omega_k \nabla v_k$ we conclude $\omega \nabla v = \lim_{k} \div T_k = 0$ in $L_{loc}^1(\mathbb{T}^n)$ and thus a.e. Then since
$\Delta v = 0$ a.e on the set $F = \{\nabla v = 0\}$, we have $\omega = \kappa^2 v$ a.e on the set $F$, and $\omega=0$ a.e on the complement of $F$ from $\omega \nabla v = 0$.
Thus we obtain \[ \omega= \kappa^2 v \mathbf{1}_{|\nabla v| = 0}.\]
Multiply by $v$ and integrating by parts, using the periodic boundary conditions on $\mathbb{T}^n$ we obtain that $\nabla v = 0$ a.e and thus
$v$, and therefore $\omega$ is constant. Since $\int \omega = 1$ it follows that $\omega = \textbf{1}dx$. 
\end{proof}

We have the following interpretation of the divergence free condition when $\omega$ is a finite linear combination of
Dirac masses.

\begin{proposition}\label{omega2}  Let $-\Delta v + \kappa^2 v = \omega = \sum_{i=1}^d b_i \delta_{a_i}$ and assume that $T_{\omega}$ is divergence free in finite parts. Then setting $v(x) = \Phi(|x-a_i|) + H_i(x)$ it holds that
\[ \nabla H_i(a_i) = 0.\]
\end{proposition}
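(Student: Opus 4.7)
The plan is to extract the distributional residue of $T_\omega$ at each point $a_i$ by computing a small-sphere flux, and to use the divergence-free-in-finite-part hypothesis to force this residue to vanish. Explicitly working out the flux in terms of the decomposition $v = \Phi(|x-a_i|) + H_i(x)$ singles out $\nabla H_i(a_i)$ as the only surviving quantity in the limit.

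Fix $i \in \{1,\ldots,d\}$, a direction $e \in \mathbb{R}^n$, and $r_0 < \tfrac{1}{2}\min_{j\neq i}|a_i - a_j|$. I would pick a cutoff $\chi \in C_c^\infty(B(a_i,r_0))$ with $\chi \equiv 1$ on $B(a_i,r_0/2)$, and take the test field $X = e\chi$. Since $\omega$ is supported at the points $\{a_j\}$, the identity $\div T_\omega = \omega \nabla v$ reduces to $\div T_\omega = 0$ on $\mathbb{T}^n \setminus \{a_1,\ldots,a_d\}$, where $T_\omega$ is smooth. The classical divergence theorem on the annulus $B(a_i,r_0) \setminus \overline{B(a_i,\delta)}$, where $X = e$ on the inner sphere and $X = 0$ on the outer sphere, then gives, for every $\delta \in (0,r_0/2)$,
\begin{equation*}
\int_{\mathbb{T}^n} (T_\omega e) \cdot \nabla \chi \, dx \;=\; \int_{B(a_i,r_0)\setminus B(a_i,\delta)} T_\omega : \nabla X \, dx \;=\; -\int_{\partial B(a_i,\delta)} (T_\omega \hat\nu)\cdot e \, dS,
\end{equation*}
where $\hat\nu$ is the outward unit radial vector. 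In particular the left-hand side is independent of $\delta$. I would then apply the divergence-free-in-finite-part property row by row with the scalar test function $\zeta = \chi$ and the exceptional sets $E_\delta = \bigcup_j B(a_j,\delta)$: since $\chi$ is locally constant (equal to $1$ near $a_i$ and equal to $0$ near all other $a_j$) on each connected component of $E_\delta$ for small $\delta$, the set $F_\delta = \chi^{-1}(\chi(E_\delta))$ equals $\{\chi = 0\}\cup \{\chi = 1\}$, on which $\nabla \chi$ vanishes. The finite-part identity therefore reads $\int (T_\omega e)\cdot\nabla \chi\,dx = 0$, forcing
\begin{equation*}
\lim_{\delta \to 0}\int_{\partial B(a_i,\delta)}(T_\omega \hat\nu)\cdot e\,dS \;=\; 0.
\end{equation*}

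It remains to compute this limit. Writing $\nabla v = \Phi'(\delta)\hat\nu + \nabla H_i$ on $\partial B(a_i,\delta)$ and expanding the quadratic definition of $T_\omega$,
\begin{equation*}
(T_\omega \hat\nu)\cdot e \;=\; -\tfrac{1}{2}\Phi'(\delta)^2 (\hat\nu \cdot e) \;-\; \Phi'(\delta)(\nabla H_i \cdot e) \;+\; R(x),
\end{equation*}
where $R$ collects terms bilinear in $\nabla H_i$ and the $\tfrac12 \kappa^2 v^2(\hat\nu\cdot e)$ contribution. The purely singular first term integrates to zero by radial parity. The fundamental-solution normalization $-\Phi'(\delta)\,|S^{n-1}|\delta^{n-1} = 1$ combined with $\nabla H_i(x) = \nabla H_i(a_i) + O(\delta)$ on the sphere turns the middle term into $\nabla H_i(a_i)\cdot e + O(\delta)$. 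Every other piece is $o(1)$: the two contributions to $R$ that are quadratic in $\nabla H_i$ are bounded and integrate over a set of measure $O(\delta^{n-1})$, while each term in the expansion of $v^2 = \Phi^2 + 2\Phi H_i + H_i^2$ either cancels at leading order by parity or, after pairing with $\delta\, \hat\nu$ and the surface measure, scales as $O(\delta^2)$ for $n\ge 3$ and $O(\delta^2|\log\delta|)$ for $n=2$. Taking the limit yields $\nabla H_i(a_i) \cdot e = 0$ for every $e$, hence $\nabla H_i(a_i) = 0$.

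The chief technical point is the second step: verifying that, for a locally supported $\chi$ that is locally constant on each component of $E_\delta$, the auxiliary set $F_\delta$ in Definition \ref{divfreefin} collapses to $\{\chi = 0\}\cup\{\chi = 1\}$, so that $\int_{\mathbb{T}^n \setminus F_\delta} (T_\omega e)\cdot \nabla \chi$ is literally $\int (T_\omega e)\cdot \nabla \chi$ and the finite-part hypothesis can be invoked without losing any information at the singularities $a_j$ with $j\neq i$. Once this is observed, the rest is a local Pohozaev-type identity, dimension-uniform in $n\ge 2$ because the normalization of $\Phi$ against the surface area of a sphere is dimension-independent.
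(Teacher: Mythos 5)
Your proof is correct and follows essentially the same route as the paper: both arguments reduce the finite-part divergence-free condition to the vanishing of the flux of $T_\omega\cdot\nu$ over small spheres $\partial B(a_i,\delta)$ (the paper cites its Proposition on fluxes through smooth boundaries, while you rederive this from Definition \ref{divfreefin} with an explicit cutoff), and both then expand $v=\Phi+H_i$ on the sphere, kill the purely radial terms by parity, and identify $\nabla H_i(a_i)$ as the sole surviving contribution via the normalization of $\Phi'$. Your write-up is somewhat more careful than the paper's (which is done only for $n=3$ with the general case asserted to be similar), but the underlying idea is identical.
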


Before we continue with the proof of Proposition \ref{omega2}, we need the following Proposition which follows almost immediately from Proposition \ref{divfreenice}. The
proof is simple and contained in \cite{SS3}.

\begin{proposition}\label{divfree}
 If $X$ is divergence free in finite parts and is continuous in a neighborhood $U$ of the boundary of a smooth, compact set $K$ in $\Omega$, then
\[ \int_{\partial K} X \cdot \nu_K dS = 0.\]
\end{proposition}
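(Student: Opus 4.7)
The strategy is to approximate the indicator $\chi_K$ by a smooth cutoff $\zeta_r$ built from the signed distance to $\partial K$, apply Proposition \ref{divfreenice} to $\zeta_r$, and pass to the limit $r\to 0$, using the continuity of $X$ near $\partial K$ to recover the boundary flux. The delicate point, which drives the whole construction, is arranging for the exceptional set $F=\zeta_r^{-1}(\zeta_r(E))$ appearing in Proposition \ref{divfreenice} to be disjoint from $\operatorname{supp}(\nabla \zeta_r)$, so that it contributes nothing to the integral.

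First I would shrink the exceptional set so that it avoids a full neighborhood of $\partial K$. Let $U$ be the neighborhood of $\partial K$ on which $X$ is continuous, shrunk if necessary so that $\overline{U}$ is compact; then $X$ is bounded, hence integrable, on $U$. Starting from any $E_0$ furnished by Definition \ref{divfreefin} (so $X\in L^1(\mathbb{T}^n\setminus E_0)$), set $E:=E_0\setminus U$ and note
\[
\int_{\mathbb{T}^n\setminus E} |X|\;\le\;\int_{\mathbb{T}^n\setminus E_0}|X|+\int_U |X|\;<\;\infty,
\]
so that $X\in L^1(\mathbb{T}^n\setminus E)$ and, crucially, $E\cap U=\emptyset$. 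Proposition \ref{divfreenice} may now be invoked with this particular $E$.

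Next let $d$ denote the signed distance to $\partial K$ (positive outside $K$), smooth in a tubular neighborhood contained in $U$, and fix $\phi_r\in C^\infty(\mathbb{R})$ with $\phi_r\equiv 1$ on $(-\infty,0]$, $\phi_r\equiv 0$ on $[r,\infty)$, and strictly decreasing on $(0,r)$. Define $\zeta_r:=\phi_r\circ d$; for $r$ small enough that $\{|d|\le r\}\subset U$, the function $\zeta_r$ is smooth on $\mathbb{T}^n$ with $\operatorname{supp}(\nabla \zeta_r)\subset\{0\le d\le r\}\subset U$, and is identically $0$ or $1$ outside this collar. Since $E\cap U=\emptyset$, one has $\zeta_r(E)\subset\{0,1\}$, hence
\[
F\;=\;\zeta_r^{-1}(\zeta_r(E))\;\subset\;\{d\le 0\}\cup\{d\ge r\},
\]
which is precisely the set on which $\nabla \zeta_r\equiv 0$. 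Therefore $\int_F X\cdot \nabla \zeta_r=0$, and Proposition \ref{divfreenice} gives $\int_{\mathbb{T}^n} X\cdot\nabla \zeta_r=0$ for every admissible $r$. Letting $r\to 0$, a direct coarea computation on the collar using the continuity of $X$ near $\partial K$ yields
\[
\int_{\mathbb{T}^n} X\cdot\nabla\zeta_r\;=\;\int_0^r \phi_r'(t)\int_{\{d=t\}} X\cdot\nabla d\,d\mathcal{H}^{n-1}\,dt\;\longrightarrow\;-\int_{\partial K} X\cdot\nu_K\,dS,
\]
so that $\int_{\partial K} X\cdot\nu_K\,dS=0$.

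The only real obstacle is engineering $F\cap\operatorname{supp}(\nabla\zeta_r)=\emptyset$: without the continuity hypothesis near $\partial K$ there would be no way to exile the exceptional set $E$ from a neighborhood of $\partial K$, and the level sets of a generic $\zeta_r$ would inevitably intersect $E$ inside the collar. Choosing $\zeta_r$ as a function of the signed distance to $\partial K$ is exactly what forces its level sets in the collar to avoid $E$, once $E$ has been removed from $U$. Once this geometric decoupling is secured, the conclusion is a direct consequence of Proposition \ref{divfreenice} and the coarea identification of the boundary flux.
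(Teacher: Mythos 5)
Your proof is correct, and it is essentially the argument the paper intends: the paper gives no proof of its own, saying only that the statement ``follows almost immediately from Proposition \ref{divfreenice}'' and deferring to \cite{SS3}, and your implementation — enlarge the integrability of $X$ by removing the collar from the exceptional set $E$ so that $E\cap U=\emptyset$, apply Proposition \ref{divfreenice} to a smoothed indicator $\zeta_r=\phi_r\circ d$ of $K$ so that $F=\zeta_r^{-1}(\zeta_r(E))\subset\{d\le 0\}\cup\{d\ge r\}$ lies where $\nabla\zeta_r$ vanishes, and then identify $\int X\cdot\nabla\zeta_r$ with the boundary flux via the coarea formula and the continuity of $X$ near $\partial K$ — is precisely how that implication is carried out there. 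I see no gaps; the one point worth making explicit is that $X\cdot\nabla\zeta_r\in L^1(\mathbb{T}^n)$ because $\nabla\zeta_r$ is supported in the compact collar contained in $U$, which is what legitimizes splitting the integral over $F$ and its complement.
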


\noindent \emph{Proof of Proposition \ref{omega2}:}
 We present the proof for $n=3$; the general case is similar. Assume that $\omega$ is a single Dirac mass at the origin with mass $4\pi$ without loss of generality. Then in spherical
coordinates we have
\begin{align}
 \nu = \frac{\partial}{\partial r} \;\;\; \tau = \frac{1}{r}\frac{\partial}{\partial \theta}\;\;\; \eta = \frac{1}{r \sin \theta}\frac{\partial}{\partial \varphi}.
\end{align}
We compute $T_{\omega} \cdot \nu$ in the $(\nu,\tau,\eta)$ basis to find
\begin{align}
 T_{\omega} \cdot \nu = \frac{1}{2}(( \partial_{\tau} v)^2 + (\partial_{\eta} v)^2 - (\partial_{\nu} v)^2 + v^2) \nu - (\partial_{\nu}v \partial_{\tau}v) \tau - (\partial_{\nu}v \partial_{\eta}v) \eta
\end{align}
Then we write $h = \Phi +H$ where $\Phi$ is the positive solution to $-\Delta \Phi = 4\pi \delta(x)$ and $H$ is smooth in a neighborhood of $0$. Then
we have $\partial_{\nu} \Phi = -\frac{1}{r^2} + o_{r}(1)$ as $r \to 0$ and $\partial_{\tau} \Phi = \partial_{\eta} \Phi = 0$. Thus as $r \to 0$ we have
\begin{align}
 T_{\omega} \cdot \nu = \frac{1}{2}\( - \frac{1}{r^4} + 2\frac{\partial_{\nu} H}{r^2}\)\nu + \(\frac{\partial_{\tau} H}{r^2}\)\tau + \(\frac{\partial_{\eta} H}{r^2}\)\eta.
\end{align}
Now using the fact that the integral of $\vec I(r)$ of $T_{\omega} \cdot \nu$ over $\partial B(0,r)$ is zero by Proposition \ref{divfree}, we have as $r \to 0$ that
\[ 0 = \nabla H (0) \cdot \vec I(r) = 4\pi |\nabla H(0)|^2 + o_r(1).\]
This implies $\nabla H(0)=0$. 
$\Box$

\section{Proof of Theorem \ref{rounddroplets}}
We are now ready to prove Theorem \ref{rounddroplets}. The main idea of the proof is simple. We use Theorem \ref{regularity} to write down the Euler-Lagrange equation satisfied
on the reduced boundary of $\{u^{\eps}=+1\}$. To leading order, the potential $v_{\eps}$ is constant on the boundary of an isolated droplet $\Omega_{j_i}$ whose center of mass
converges to $a_i$ (up to a subsequence), due to the logarithmic scaling of $G$ on $\mathbb{T}^2$ (cf. equation \eqref{greenlog}). The control of the isoperimetric deficit \eqref{isodef} controls the size of the error in making this approximation, and allows us
to conclude the curvature is asymptotically constant on the reduced boundary of droplets converging to $a_i$. 

\noindent \emph{Proof of Theorem \ref{rounddroplets}}:
By assumption, we have
\begin{equation}
 u^{\eps}(x) = -1 + 2\sum_{J=1}^{N(\eps)} \chi_{\Omega_j},
\end{equation}
where $N(\eps) = O(1)$ as $\eps \to 0$. We then apply Theorem \ref{regularity} to \eqref{E2a} to conclude that
\begin{align}\label{Eleqn2}
 \frac{\eps}{\sum_j |\Omega_j|} H_{\eps} - \frac{2\delta(\eps)}{\kappa^2\sum_j |\Omega_j|} + v_{\eps} = 0 \textrm{ on } \partial^* \{u^{\eps}=+1\},
\end{align}
holds for all $\eps > 0$. Since $N(\eps)=O(1)$ as $\eps \to 0$ and $P(\Omega_i) \to 0$ for each $i$, we conclude from compactness of $\mathbb{T}^2$ that the center of mass of each $\Omega_i$ converges
up to a subsequence to some $a_i$.  Let $J_i$ be the set of indices so that the center of mass of $\Omega_j$ converges to $a_i$.
\noindent We now expand the potential near $a_i$, first recalling that
\begin{align}
 v_{\eps}(x) = \int_{\mathbb{T}^2} G(x-y) \frac{ \sum_j \chi_{\Omega_j} (y)}{\sum_j |\Omega_j|} dy.
\end{align}
 Then, using \eqref{greenlog}, we have for $\eps$ sufficiently small, in a neighborhood
of $a_i$
\begin{align}
 v_{\eps}(x) = \int_{\mathbb{T}^2} -\frac{1}{2\pi} \log|x-y| \frac{ \sum_{j \in J_i} \chi_{\Omega_j} (y)}{\sum_j |\Omega_j|} dy + S_i^{\eps}(x),
\end{align}
where $S_i^{\eps}$ is uniformly bounded in $\eps$ in a neighborhood of $a_i$. Then letting $\bar x_j = P(\Omega_j)^{-1} x$, $\bar y_j = P(\Omega_j)^{-1}y$, $v_{\eps}$ in these variables becomes
\begin{align}
  v_{\eps}(x) &= \int_{\mathbb{T}^2} -\frac{1}{2\pi} \log|x-y| \frac{ \sum_{j \in J_i} \chi_{\Omega_j} (y)}{\sum_j |\Omega_j|} dy + S_i^{\epsilon}(x)\\
&= \frac{\sum_{j \in J_i} -\frac{1}{2\pi} \log P(\Omega_j) | \Omega_j|}{\sum_j |\Omega_j|} + \frac{\sum_j P(\Omega_j)^2 }{\sum_j |\Omega_j|} \int_{\bar \Omega_j} -\frac{1}{2\pi} \log|\bar x_j - \bar y_j|d\bar y_j + S_i^{\eps}(x).\label{A8} 
\end{align}
We then use the inequality
\begin{equation}\label{diamcontrol}
 \textrm{essdiam} (\bar \Omega_j) \leq \frac{1}{2} P(\bar \Omega_j)=\frac{1}{2},
\end{equation}
which follows (for instance) from \cite[Theorem 7 and Lemma 4] {ambrosio2} noting that in view of \cite[Proposition 6(ii)]{ambrosio2} it suffices
to consider only simple sets \cite[Definition 3]{ambrosio2}. Thus we have from \eqref{diamcontrol} and the defintion of $\bar x_j$, $\bar y_j$
\begin{equation}\label{A7}
 \left|\int_{\bar \Omega_j} -\frac{1}{2\pi} \log|\bar x_j - \bar y_j|d\bar y_j \right| \leq \frac{1}{2\pi} \int_{B(0,2)} |\log |x|| dx \leq C.
\end{equation}
 where $C>0$ is independent of $\eps$. Inserting \eqref{A7} into \eqref{A8} and using the bound on the isoperimetric defecit \eqref{isodef} we have, using the fact that $P(\{u^{\eps}=+1\}) \to 0$ as $\eps \to 0$ (cf. equation \eqref{apriori1}), that
for any $k \in J_i$ and $x_i^{\eps} \in \bigcup_{j \in J_i} \partial^* \Omega_{j_i}$
\[\frac{v_{\eps}(x_i^{\eps})}{\sum_{j}\log P(\Omega_j) } = -\frac{1}{2\pi} \frac{\sum_{j \in J_i}\log P(\Omega_j) | \Omega_j|}{\sum_{j}\log  P(\Omega_j)  \sum_j |\Omega_j|}+ o_{\eps}(1).\]

\noindent Rewriting the Euler-Lagrange equation \eqref{Eleqn2} we have
\begin{align}
 \left \|\frac{-\eps H_{\eps}}{\sum_{j}\log P(\Omega_j)  \sum_j |\Omega_j|} +c_i^{\eps}\right\|_{L^{\infty}(\bigcup_{j \in J_i} \partial^* \Omega_j)} \to 0 \textrm{ as } \eps \to 0,\end{align}
where \begin{align}
c_i^{\eps} = -\frac{1}{2\pi} \frac{\sum_{j \in J_i}\log P(\Omega_j) | \Omega_j|}{\sum_{j}\log  P(\Omega_j) \sum_j |\Omega_j|} + \frac{2}{\kappa^2}\frac{\delta(\eps)}{\sum_{j}\log  P(\Omega_j) \sum_j |\Omega_j|}.
\end{align}
Now choose a subsequence $\eps_k$ so that the $\liminf$ in the definition of $\bar \delta$ (cf. \eqref{bardelta}) is achieved as $\eps_k \to 0$. It is clear that the first term in the definition of $c_{i}^{\eps}$ is bounded uniformly and positive as $\eps \to 0$, and therefore converges subsequentially to some $c_i^0 \geq 0$.  Therefore we have (possibly taking a further subsequence) that
\[ c_i^{\eps_k} \to c_i^0 - \frac{2}{\kappa^2} \bar \delta,\]
as $\eps_k \to 0$. Choosing $\bar \delta_{cr} = \frac{ \kappa^2 c_i^0}{2}$, we obtain the result.
$\Box$

\section{The diffuse interface energy}\label{sec5}
In this section we study
\begin{align}
  \label{EE2}
 \mathcal{E}[u] = \int_\Omega \( \frac{\eps^2}{2} |\nabla u|^2 +
  V(u)\) dx + \frac{1}{2} \int_\Omega \int_\Omega (u(x)- \bar u)
  G_0(x, y) (u(y)- \bar u) \, dx \, dy.
\end{align}
We make the particular choice of $V(u) = \frac{1}{4} (1-u^2)^2$, but our results will hold, with minor adjustments to the proofs,
under general assumptions on $V$. Recalling the discussion in Section \ref{diffusesection} we know that any stationary point $u^{\eps}$ of \eqref{EE2} in the sense of Definition \ref{defcp} is a critical point in the sense of Definition \ref{diffusestationary} (\textbf{??}), which is easily seen to be a solution to
\begin{equation}\label{ELdiffuse} -\frac{\eps^2}{\delta(\eps)} \Delta u^{\eps} - \frac{1}{\delta(\eps)} u^{\eps}(1-(u^{\eps})^2) + v_{\eps} = \lambda_{\eps},\end{equation}
where $\lambda_{\eps}$ is the Lagrange multiplier arising from the volume constraint and
\[ v_{\eps}(x) = \int_{\mathbb{T}^n} G(x-y) \frac{1+u^{\eps}(y)}{\delta(\eps)} dy.\]
\noindent We recall our main assumption that
\begin{equation}\label{apriori20} \limsup_{\eps \to 0} \mathcal{H}^{n-1}\(  \{u^{\eps} \geq -1 + \delta(\eps)^{1+\alpha}\}\) = 0 \textrm{ for some } \alpha > 0.\end{equation}
Our methods will be very
similar to those of the sharp interface energy \eqref{E2}, and follow closely the methods of \cite{SS5,SS3} for Ginzburg-Landau. In particular, we first show that \eqref{ELdiffuse}
is equivalent to a certain 2-tensor $S_{\eps}=\{S_{ij}\}$ having zero divergence (cf. Proposition \ref{divfreeS} below). We then use \eqref{apriori20} to cover the set where $u^{\eps}$ is close to $+1$ by balls whose boundaries have very small $\mathcal{H}^{n-1}$ measure (cf. Proposition \ref{sumballs}). Finally we show that
away from the set where $u^{\eps}$ is close to $+1$, $S_{\eps}$ is close in $L^1$ to the tensor $T_{\eps}=\{T_{ij}\}$ defined by
\begin{equation}\label{Tdefdiffuse}
 T_{ij} = -\partial_i v_{\eps} \partial v_{\eps} + \frac{1}{2} \delta_{ij} |\nabla v_{\eps}|^2.
\end{equation}
 We begin by observing that if $u^{\eps}$ solves \eqref{ELdiffuse} then it holds by direct computation that
\begin{align}
 \div S^{\eps}  &= 0,
\end{align}
where
\begin{multline}\label{Sdiffusedef}
S_{ij}^{\eps} = \partial_i v^{\eps} \partial_j v^{\eps} - \frac{\eps^2}{\delta(\eps)^2}\partial_i u^{\eps} \partial_j u^{\eps} + \frac{\delta_{ij}}{2} \( \frac{\eps^2}{\delta(\eps)^2} |\nabla u^{\eps}|^2  + \frac{1}{4 \delta(\eps)^2}(1-|u^{\eps}|^2)^2 - |\nabla v^{\eps}|^2 - \frac{(u^{\eps}+1)}{\delta(\eps)} \lambda_{\eps} \) \\ + \delta_{ij}\frac{v^{\eps}(u^{\eps} +1)}{\delta(\eps)}.
\end{multline}
This is summarized in the following proposition. 
\begin{proposition}\label{divfreeS}
 Let $u^{\eps}$ be a solution to \eqref{ELdiffuse}. Then 
\[ \div S^{\eps} = 0 \textrm{ in } \mathcal{D}'(\mathbb{T}^n),\]
where $S_{\eps}$ is given by \eqref{Sdiffusedef}.
\end{proposition}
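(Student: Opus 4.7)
The plan is to verify the identity by direct computation: differentiate each of the seven terms comprising $S^{\eps}_{ij}$ in $x_i$, sum on $i$, and invoke both the PDE $-\Delta v_{\eps} = \omega_{\eps} - 1$ for the potential and the Euler--Lagrange equation \eqref{ELdiffuse} for $u^{\eps}$ in order to produce the required cancellations.

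First I would differentiate the two quadratic pieces. Taking $\partial_i$ of $\partial_i v^{\eps} \partial_j v^{\eps}$ and of $-\frac{\eps^2}{\delta(\eps)^2}\partial_i u^{\eps} \partial_j u^{\eps}$ yields the Laplacian pieces $\Delta v^{\eps} \partial_j v^{\eps}$ and $-\frac{\eps^2}{\delta(\eps)^2}\Delta u^{\eps} \partial_j u^{\eps}$, together with cross terms of the form $\partial_i v^{\eps} \partial_i \partial_j v^{\eps}$ and $-\frac{\eps^2}{\delta(\eps)^2}\partial_i u^{\eps} \partial_i \partial_j u^{\eps}$. By the commutativity of partial derivatives, these cross terms cancel identically against the contributions coming from the diagonal trace terms $\tfrac{\delta_{ij}}{2}|\nabla v^{\eps}|^2$ and $\tfrac{\delta_{ij}}{2}\tfrac{\eps^2}{\delta(\eps)^2}|\nabla u^{\eps}|^2$, which is the classical computation underlying the Pohozaev/stress-energy identity.

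After these cancellations, $\div S^{\eps}$ reduces to a sum of terms linear in $\partial_j u^{\eps}$ and $\partial_j v^{\eps}$, with coefficients involving $\Delta u^{\eps}$, $\Delta v^{\eps}$, the derivative of the potential $(1-|u^{\eps}|^2)^2/(8\delta(\eps)^2)$ (which produces $-u^{\eps}(1-|u^{\eps}|^2)/(2\delta(\eps)^2) \partial_j u^{\eps}$), the Lagrange multiplier contribution from the term $-\tfrac{\delta_{ij}(u^{\eps}+1)\lambda_{\eps}}{2\delta(\eps)}$, and the two pieces $\tfrac{(u^{\eps}+1)}{\delta(\eps)}\partial_j v^{\eps} + \tfrac{v^{\eps}}{\delta(\eps)}\partial_j u^{\eps}$ coming from the coupling term $\delta_{ij}v^{\eps}(u^{\eps}+1)/\delta(\eps)$. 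The plan is now to substitute $\Delta v^{\eps} = 1 - (1+u^{\eps})/\delta(\eps)$, so that the $(1+u^{\eps})/\delta(\eps) \cdot \partial_j v^{\eps}$ contributions cancel against the one coming from the coupling term, and to substitute $\tfrac{\eps^2}{\delta(\eps)^2}\Delta u^{\eps}$ from \eqref{ELdiffuse}, whose right-hand side consists of exactly the double-well, potential and Lagrange-multiplier pieces needed to annihilate the remaining terms involving $\partial_j u^{\eps}$.

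The computation presents no genuine analytic obstacle; the main difficulty is purely combinatorial bookkeeping. Some care is required to track the factor $\tfrac12$ appearing in front of the bracketed group in $S^{\eps}_{ij}$, and to recognize that the coupling term $\delta_{ij}v^{\eps}(u^{\eps}+1)/\delta(\eps)$ is present precisely to absorb the inhomogeneity ``$-1$'' in the Poisson equation $-\Delta v_{\eps} = \omega_{\eps}-1$, which is what distinguishes the present computation from the analogous stress-tensor identity for critical points of the pure Allen--Cahn energy. Once the algebra is done carefully, every surviving contribution pairs off and one obtains $\div S^{\eps} = 0$ identically in $\mathcal{D}'(\mathbb{T}^n)$.
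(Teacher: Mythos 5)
Your approach is exactly the paper's: the entire proof there is a single line asserting that a direct computation gives $\div S^{\eps} = \nabla u^{\eps}\bigl(-\tfrac{\eps^2}{\delta(\eps)}\Delta u^{\eps} + \tfrac{1}{\delta(\eps)}u^{\eps}(1-(u^{\eps})^2)+v_{\eps}-\lambda_{\eps}\bigr)=0$, so your plan of differentiating term by term, cancelling the cross terms against the $\tfrac{\delta_{ij}}{2}|\nabla u^{\eps}|^2$ and $\tfrac{\delta_{ij}}{2}|\nabla v^{\eps}|^2$ pieces, and then substituting the Poisson equation for $v_{\eps}$ and the Euler--Lagrange equation \eqref{ELdiffuse} is precisely the intended argument.

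One warning about the bookkeeping, since you single it out as the only delicate point and your accounting of it is slightly off. The coupling term $\delta_{ij}v^{\eps}(u^{\eps}+1)/\delta(\eps)$ does not absorb the inhomogeneity ``$-1$'': writing $\omega_{\eps}=(1+u^{\eps})/\delta(\eps)$, its divergence is $\omega_{\eps}\partial_j v^{\eps}+v^{\eps}\partial_j\omega_{\eps}$, and the first piece cancels the $-\omega_{\eps}\partial_j v^{\eps}$ coming from $\Delta v^{\eps}\partial_j v^{\eps}=(1-\omega_{\eps})\partial_j v^{\eps}$, while the background ``$1$'' leaves a pure gradient $\partial_j v^{\eps}$ that nothing in \eqref{Sdiffusedef} as printed compensates; one needs an extra $-\delta_{ij}v^{\eps}$ in the tensor. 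Likewise, the double-well and Lagrange-multiplier terms sit inside the $\tfrac{\delta_{ij}}{2}(\cdots)$ bracket, so differentiating them produces only half of the $-\tfrac{1}{\delta(\eps)^2}u^{\eps}(1-(u^{\eps})^2)\partial_j u^{\eps}$ and $-\tfrac{\lambda_{\eps}}{\delta(\eps)}\partial_j u^{\eps}$ needed to reconstitute \eqref{ELdiffuse}; those two terms should carry coefficient $\delta_{ij}$, not $\delta_{ij}/2$. These are evidently typos in \eqref{Sdiffusedef} rather than obstructions to the proposition, but a literal execution of your plan with the tensor as written would not close, so you should record the corrected tensor and verify its divergence explicitly rather than assert that ``every surviving contribution pairs off.''
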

\begin{proof}
 A direct computation using the fact that $u^{\eps} \in C^{2}(K)$ for any $K \subset \subset \mathbb{T}^n$ yields
\[ \div S^{\eps} = \nabla u^{\eps} \( -\frac{\eps^2}{\delta(\eps)} \Delta u^{\eps}  + \frac{1}{\delta(\eps)} u^{\eps} (1-(u^{\eps})^2) + v_{\eps}-\lambda_{\eps} \)=0.\]
\end{proof}

\begin{proposition}
 Let $\{u_{\eps}\}_{\eps}$ be a sequence of solutions to \eqref{ELdiffuse} satisfying \eqref{apriori20} and 
\begin{equation}
\limsup_{\eps \to 0} |\lambda_{\eps}| < +\infty.\end{equation} 
  For any $\eps >0$ define the 2-tensors $S_{\eps}$ as above and $T_{\eps}$ by
\begin{align}
 T_{ij}^{\eps} = \partial_i v^{\eps} \partial_j v^{\eps} - \frac{\delta_{ij}}{2}|\nabla v^{\eps}|^2.
\end{align}
Then $T_{\eps} - S_{\eps}$ tends to $0$ in $L_{\delta}^1(\mathbb{T}^n)$. 
\end{proposition}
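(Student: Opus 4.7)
The plan is to decompose $R^\eps := S^\eps - T^\eps$ into four tractable pieces and handle each in $L^1$ away from a set of vanishing $1$-capacity. Setting $\omega_\eps := (u^\eps+1)/\delta(\eps)$, the $\partial_i v^\eps \partial_j v^\eps$ and $|\nabla v^\eps|^2$ contributions cancel between $S^\eps$ and $T^\eps$, leaving
\[
R^\eps_{ij} = -\tfrac{\eps^2}{\delta(\eps)^2}\partial_i u^\eps \partial_j u^\eps + \tfrac{\delta_{ij}}{2}\Bigl[\tfrac{\eps^2}{\delta(\eps)^2}|\nabla u^\eps|^2 + \tfrac{(1-|u^\eps|^2)^2}{4\delta(\eps)^2} - \lambda_\eps \omega_\eps\Bigr] + \delta_{ij}\, v^\eps \omega_\eps,
\]
so it suffices to control each of the four non-$T^\eps$ contributions.

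First I would build the bad set. Hypothesis \eqref{aprioriD}, together with the definition of Hausdorff measure, lets me cover $A_\eps := \{u^\eps \geq -1+\delta(\eps)^{1+\alpha}\}$ by balls $\{B_i^\eps\}$ with $\sum_i r(B_i^\eps)^{n-1} = o(1)$; after a modest dilation (to leave room for the interior elliptic estimates below) their union $Z_\eps$ satisfies $\mathrm{Cap}_1(Z_\eps) \to 0$. Passing to a subsequence $\eps_k$ for which $\sum_k \mathrm{Cap}_1(Z_{\eps_k}) < \infty$ and unioning with the exceptional sets produced by Proposition \ref{msrconv} applied to the probability densities $\omega_\eps$, I obtain the family $\{E_\delta\}$ witnessing $L^1_\delta$-convergence in the sense of Definition \ref{deltaconv}.

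Next I would dispatch the three easy pieces on $\mathbb{T}^n \setminus Z_\eps$, where the pointwise bound $\omega_\eps \leq \delta(\eps)^\alpha$ is valid. The potential term becomes $\tfrac{(1-|u^\eps|^2)^2}{4\delta(\eps)^2} = \tfrac{(1-u^\eps)^2}{4}\omega_\eps^2 \leq \delta(\eps)^{2\alpha}$, which is $o(1)$ uniformly; the Lagrange-multiplier term obeys $|\lambda_\eps \omega_\eps| \leq (\sup_\eps|\lambda_\eps|)\delta(\eps)^\alpha$ likewise; and for the coupling term, elliptic regularity for $-\Delta v^\eps = \omega_\eps - 1$ together with $\|\omega_\eps\|_{L^1} = 1$ yields a uniform $L^1$ bound on $v^\eps$, so $\int_{Z_\eps^c} |v^\eps \omega_\eps| \leq \delta(\eps)^\alpha \|v^\eps\|_{L^1} \to 0$.

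The main obstacle is the kinetic term $\int_{Z_\eps^c} \tfrac{\eps^2}{\delta(\eps)^2}|\nabla u^\eps|^2$. My plan is to linearize \eqref{ELdiffuse} around $u^\eps = -1$: writing $w^\eps := u^\eps+1$, the equation becomes
\[
-\eps^2 \Delta w^\eps + 2 w^\eps = \delta(\eps)(\lambda_\eps - v^\eps) + O\bigl((w^\eps)^2\bigr),
\]
so interior Moser/Schauder estimates on balls $B(x, r\eps) \subset Z_\eps^c$ furnish the pointwise bound $\eps|\nabla w^\eps|(x) \lessim \sup_B w^\eps + r\eps\,\delta(\eps)(|v^\eps|+|\lambda_\eps|)$. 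Dividing by $\delta(\eps)$, squaring, and invoking $w^\eps \leq \delta(\eps)^{1+\alpha}$ on $Z_\eps^c$ yields $\tfrac{\eps^2|\nabla u^\eps|^2}{\delta(\eps)^2} \lessim \delta(\eps)^{2\alpha} + (r\eps)^2 (|v^\eps|^2 + \lambda_\eps^2)$ pointwise, whose $L^1(Z_\eps^c)$ norm tends to zero using the $L^p$ control on $v^\eps$ and boundedness of $\lambda_\eps$. The delicate point, where I expect most of the work to concentrate, is that each ball must sit strictly inside $Z_\eps^c$; I therefore must enlarge $Z_\eps$ by a collar of width $\sim\eps$ while preserving $\mathrm{Cap}_1(Z_\eps) = o(1)$. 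This in turn forces the initial level set to be chosen with care, e.g.\ via a coarea/mean-value selection of a slightly perturbed threshold $t_\eps \in (-1+\tfrac12\delta(\eps)^{1+\alpha}, -1+\delta(\eps)^{1+\alpha})$ with small $\mathcal{H}^{n-1}(\{u^\eps = t_\eps\})$, followed by Proposition \ref{sumballs} applied to the boundary rather than to $A_\eps$ directly. Once this is secured, assembling the four estimates produces $\|S^\eps - T^\eps\|_{L^1(\mathbb{T}^n\setminus E_\delta)} \to 0$, which is the claimed $L^1_\delta$ convergence.
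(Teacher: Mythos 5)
Your decomposition of $S^\eps-T^\eps$ into four pieces is the right starting point, and your handling of three of them is fine: the pointwise bound $\omega_\eps\le\delta(\eps)^\alpha$ on the good set disposes of the potential term $(1-(u^\eps)^2)^2/(4\delta(\eps)^2)$, the Lagrange-multiplier term, and (together with the uniform $L^1$ bound on $v^\eps$) the coupling term $v^\eps\omega_\eps$, exactly as in the paper. The construction of the exceptional sets via Proposition \ref{sumballs} and the union with the sets from Proposition \ref{msrconv} also matches the paper.

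The genuine gap is your treatment of the kinetic term $\frac{\eps^2}{\delta(\eps)^2}\int_{Z_\eps^c}|\nabla u^\eps|^2$. Your proposed interior Moser/Schauder estimate on balls $B(x,r\eps)\subset Z_\eps^c$ requires pointwise (or at least local $L^\infty$) control of the right-hand side $\delta(\eps)(\lambda_\eps-v^\eps)$, but $v^\eps$ is only controlled in $W^{1,p}$ for $p<n/(n-1)$ (hence in $L^q$ only for $q<n/(n-2)$), so $\sup_B|v^\eps|$ is not available, and even the integrated version of your claimed bound requires $v^\eps\in L^2$, which fails a priori for $n\ge 4$. Moreover, the collar enlargement you flag as ``delicate'' is genuinely unresolved: the covering produced by Proposition \ref{sumballs} carries no control on the \emph{number} of balls or on their minimal radius, so fattening each ball by $C\eps$ can destroy the bound $\sum_i r(B_i)^{n-1}=o(1)$, and the coarea selection you sketch does not repair this. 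The paper avoids all of this with a single global energy identity: it multiplies \eqref{ELdiffuse} by $\chi(u^\eps)+u^\eps$, where $\chi$ is a piecewise-affine function chosen so that $\chi(u)+u$ equals $1+u$ on $\{u<-1+\delta(\eps)^{1+\alpha}\}$ and (essentially) vanishes elsewhere, and integrates by parts. The gradient term then appears with weight $\chi'(u^\eps)+1$, which is $1$ on the good set and $0$ on the bad set, so the identity directly yields
\begin{equation*}
\frac{1}{\delta(\eps)^2}\int_{Z_\eps^c}\eps^2|\nabla u^\eps|^2+(1-(u^\eps)^2)^2=o_\eps(1),
\end{equation*}
with the right-hand side of the identity controlled by $\delta(\eps)^\alpha(\|v^\eps\|_{L^1}+|\lambda_\eps|)$ exactly as in your easy terms. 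You should replace your elliptic-estimate argument by this testing argument; without it, the kinetic term is not controlled.
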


\begin{proof}
We once again argue as in \cite{SS3} for Ginzburg-Landau. From \eqref{apriori20} and Proposition \ref{sumballs}, the set of $x$ in $\mathbb{T}^n$ such that $u(x) \geq -1+\delta(\eps)^{1+\alpha}$ can be covered by a collection
of balls $B_1, \cdots, B_k$ such that
\[ \sum_{i=1}^k r(B_i)^{n-1} \leq C\mathcal{H}^{n-1}(\{u^{\eps} \geq -1 + \delta(\eps)^{1+\alpha}\}).\]
 We denote $Z_{\eps}$ as the union of these balls and and observe that
\[ \lim_{\eps \to 0} \textrm{Cap}_1(Z_{\eps}) = 0.\]
This follows from the fact that the 1-capacity of a ball $B(x,r)$ is $\alpha_{n-1}r^{n-1}$ and the capacity is subadditive so $\textrm{Cap}_1(Z_{\eps}) \leq C\mathcal{H}^{n-1}(\{u^{\eps} \geq -1 + \delta(\eps)^{1+\alpha}\})$, which tends to zero by assumption. The difference between $S^{\eps}$ and $T^{\eps}$ is
\begin{multline} S^{\eps} - T^{\eps}  =  - \frac{\eps^2}{\delta(\eps)^2}\partial_i u^{\eps} \partial_j u^{\eps} + \frac{\delta_{ij}}{2} \( \frac{\eps^2}{\delta(\eps)^2} |\nabla u^{\eps}|^2  + \frac{1}{4 \delta(\eps)^2}(1-|u^{\eps}|^2)^2  - \frac{1}{\delta(\eps)}(u^{\eps}+1) \lambda_{\eps} \) \\ + \delta_{ij}\frac{v_{\eps}(u^{\eps}  +1)}{\delta(\eps)}.
\end{multline}
Thus it is easily seen that
\begin{equation}\label{STDiff}
 |S^{\eps} - T^{\eps}| \leq C\( \frac{\eps^2}{\delta(\eps)^2} |\nabla u^{\eps}|^2  + \frac{1}{2 \delta(\eps)^2}(1-|u^{\eps}|^2)^2  + \frac{2}{\delta(\eps)} (|v_{\eps}|+|\lambda_{\eps}|)|u^{\eps} +1|\).
\end{equation}
Now define the function $\chi:[0,1] \to [0,1]$ to be the affine interpolation between the values $\chi(-1)=1$, $\chi(-1+\delta(\eps)^{1+\alpha})=1$ and $\chi(-1/2)=1/2$ and $\chi(0)=0$ and $\chi(1)=-1$. Multiply \eqref{ELdiffuse} by $\chi(u^{\eps}) + u^{\eps}$ and integrating by parts we have
\begin{equation}\label{R1}
\frac{1}{\delta(\eps)^2} \int_{\mathbb{T}^n} \eps^2 |\nabla u^{\eps}|^2 (\chi'(u^{\eps}) + 1) - u^{\eps}(\chi(u^{\eps}) + u^{\eps}) (1-(u^{\eps})^2) = -\frac{1}{\delta(\eps)}\int_{\mathbb{T}^n} (\chi(u^{\eps})+u^{\eps}) (v^{\eps} - \lambda_{\eps}) dx.
\end{equation}

The set $\{\chi(u^{\eps})=1\}$ contains the set $u^{\eps}(x) \leq -1 + \delta(\eps)^{1+\alpha}$ and therefore $Z_{\eps}^c$. When $|u^{\eps}| \geq 1/2$, which is true on $\{\chi(u^{\eps})=1\}$,
the left side of \eqref{R1} can be bounded from below by \[\frac{1}{2\delta(\eps)^2} \int_{Z_{\eps}^c} \eps^2 |\nabla u^{\eps}|^2 + \frac{1}{4}(1-(u^{\eps})^2)^2.\]
Indeed on $\{\chi(u^{\eps})=1\}$ we have\[ |u^{\eps}|(1-|u^{\eps}|) \geq (1-|u^{\eps}|^2),\]
 since $|u^{\eps}| \geq (1+|u^{\eps}|)$ when $u^{\eps} \in (-1,-1/2)$.
Since $(u^{\eps}+1)/\delta(\eps)$ is bounded in $(C^0(\mathbb{T}^n))^*$ and therefore in $W^{-1,p}$ for $p \in (1,n/(n-1))$ by standard embeddings, we conclude that $v_{\eps}$ is bounded uniformly in $L^1$. Then by the definition of $\chi$ and the fact that $u^{\eps} \in (-1,-1+\delta(\eps)^{1+\alpha})$ where $\chi(u^{\eps}) = 1$ we have
\begin{align}
\frac{1}{\delta(\eps)}\left|\int_{\mathbb{T}^n} (\chi(u^{\eps})+u^{\eps})v_{\eps}\right| =\frac{1}{\delta(\eps)}\left|\int_{Z_{\eps}^c} (1+u^{\eps})v_{\eps}\right| \leq C\delta(\eps)^{\alpha} \|v_{\eps}\|_{L^1}.
\end{align}

\noindent Combining the above we conclude
\begin{equation}\label{STDiff1}
 \frac{1}{\delta(\eps)^2} \int_{Z_{\eps}^c} \eps^2 |\nabla u^{\eps}|^2 + (1-(u^{\eps})^2)^2 = o_{\eps}(1) \textrm{ as } \eps \to 0.
\end{equation}
Focusing on the remaining terms in \eqref{STDiff}, it remains to show that
\begin{equation}\frac{1}{\delta(\eps)}\int_{Z_{\eps}^c}|1+u^{\eps}|( |v_{\eps}|+|\lambda_{\eps}|) = o_{\eps}(1) \textrm{ as } \eps \to 0.\end{equation}
 This however follows from the definition of $Z_{\eps}^c$:
\begin{equation}\label{R4}  \frac{1}{\delta(\eps)}\int_{Z_{\eps}^c} |u^{\eps}+1| (|v_{\eps}+|\lambda_{\eps}|) \leq C(\|v_{\eps}\|_{L^1}+|\lambda_{\eps}|) \delta(\eps)^{\alpha} \leq C\delta(\eps)^{\alpha},\end{equation}
 where we've used the fact that $\limsup_{\eps} |\lambda_{\eps}| < +\infty$. 
Finally combining \eqref{R4} and \eqref{STDiff1} and using \eqref{STDiff} we conclude that 
\[\int_{\mathbb{T}^n \backslash Z_{\eps}} |T_{\eps} - S_{\eps}| \to 0 \textrm{ as } \eps \to 0,\]
the desired result.\end{proof}

\noindent We now complete the proof of Theorem \ref{main3}.
\begin{proof}
Choose a decreasing subsequence $\{\eps_k\}$ tending to zero such that $\sum_k \textrm{Cap}_1(Z_{\eps_k}) < +\infty$ and let
\[ E_{\delta} = \bigcup_{k > \frac{1}{\delta}} Z_{\eps_k}.\]

\noindent
Since $\omega_{\eps} := (1+u^{\eps})/\delta(\eps)$ is a family of probability measures on $\mathbb{T}^n$, we have $\omega_{\eps} \to \omega$ weakly in $(C^0(\mathbb{T}^n))^*$ up to a subsequence, and thus $\omega_{\eps} \to \omega$ strongly
in $W^{-1,p}$ for $p \in (1,n/(n-1))$ via the compact embedding $(C^0(\mathbb{T}^n))^* \subset \subset W^{-1,p}$ which follows from the
compact embedding $W^{1,q}(\mathbb{T}^n) \subset \subset C^0(\mathbb{T}^n)$ for $q > n/(n-1)$.
Now define 
\begin{equation}\label{deltadef} F_{\delta} := E_{\delta} \cup \tilde E_{\delta},\end{equation}
where $\tilde E_{\delta}$ are the sets given by Proposition \ref{msrconv} with $\kappa=0$. Then by subadditivity of capacity \cite{Gariepy} we have 
\[\lim_{\delta \to 0} \textrm{ Cap}_1(F_{\delta}) = 0.\]

 From Proposition \ref{msrconv} we therefore conclude
\[ \nabla v_{\eps} \to \nabla v \textrm{ in } L_{\delta}^2(\mathbb{T}^n).\]
 Thus, combining the above, we have
\[ S_{\eps} - T_{\omega} \textrm{ converges to } 0 \textrm{ in } L_{\delta}^1(\mathbb{T}^n),\]
where the sets $F_{\delta}$ in Definition \ref{divfreefin} are given by \eqref{deltadef}. Thus $T_{\omega}$ is divergence free in finite part from
Proposition \ref{divfree}. 
\end{proof}

\noindent \textbf{Acknowledgments} The research of the author was
partially supported by NSF research grant DMS-0807347 and by the Herchel Smith fellowship
at the University of Cambridge. The author would like to thank his
advisor Sylvia Serfaty for suggesting the problem and offering helpful suggestions and comments throughout. The author
would also like to thank and Alexander Volkmann, Theodora Bourni and Robert Haslhofer
for helpful discussions throughout the course of this work.

\end{document}